\DeclareMathOperator{\SheafHom}{\mathscr{H}\text{\kern -3pt {\calligra\large om}}\,}
\title[Elliptic arrangements of complex multiplication type]{Elliptic arrangements\\of complex multiplication type}
\author{Luca Moci, Roberto Pagaria, Maddalena Pismataro, Alejandro Vargas}
\begin{document}

\begin{abstract}
We provide a natural definition of an elliptic arrangement, extending the classical framework to an elliptic curve $\mathcal{E}$ with complex multiplication.
We analyse the intersections of  elements of the arrangement and their connected components as $\operatorname{End}(\mathcal{E})$-modules.
Furthermore, we prove that the combinatorial data of elliptic arrangements define both an arithmetic matroid and a matroid over the ring $\operatorname{End}(\mathcal{E})$. In this way, we obtain a class of arithmetic matroids that is different from the class of arithmetic matroids realizable via toric arrangements. 
Finally, we show that the Euler characteristic of the complement is an evaluation of the arithmetic Tutte polynomial.

\smallskip
\noindent \textbf{Keywords:} Elliptic arrangements, CM elliptic curves, arithmetic matroids, matroids over rings.

\smallskip
\noindent \textbf{MSC2020:} 52C35, 14N20.
\end{abstract}

\maketitle
\section{Introduction}
    \label{sec:Introduction}

The theory of \textit{hyperplane arrangements} is a bridge between geometry, algebra and combinatorics, with countless applications across these fields and other areas of mathematics.
The leitmotif of arrangement theory consists in the explicit computation of geometric invariants -- such as cohomology and homotopy type -- from combinatorial data associated with the arrangement, such as the poset of layers or the poset of faces.
Foundational results of this kind have been discovered for several decades: the Orlik-Solomon algebra \cite{Arnold,OrlikSolomon}, the Salvetti complex \cite{Salvetti}, the cohomology of wonderful models of hyperplane arrangements \cite{DeConciniProcesiWonderfulModel,FeichtnerYuzvinsky}, among others.

In the last two decades, the researchers' attention has shifted toward more general types of arrangements, where the ambient vector space is replaced with richer and more interesting topological spaces.
Motivated by applications to integral polytopes and box-splines, De Concini and Procesi \cite{DeConciniProcesiToricArrangements, DeConciniProcesiBook} initiated the study of \textit{toric arrangements}, defined as collections of hypertori in a complex torus.
More recently, other generalizations have emerged: \textit{elliptic arrangements}, which are collections of certain type of divisors in a product of elliptic curves $\EC^n$, and \textit{abelian arrangements}, defined as collections of certain subgroups in an ambient space $G^n$ where $G$ is an abelian Lie group.

The connection between topological invariants and combinatorics persists for toric arrangements, although in a subtler way \cite{moc12a, dm13,dAntonioDelucchiSalvettiComplex,CallegaroDelucchi,CDDMP,mp22}, \cite{PagariaTwoExamples}, while it becomes weaker in the elliptic case \cite{lv12,DelucchiPagaria} \cite{PagariaEllipticArrangements} and in the abelian case \cite{bib15,TranYoshinaga,LiuTranYoshinaga,BibbyDelucchi,bpp24}.

The standard definition of elliptic arrangements, used in the literature, is independent of the specific elliptic curve $\EC$.
In fact, one only considers divisors in $\EC^n$ of the form $\ker \phi$ where
\[
\setlength\arraycolsep{0pt}
\phi \colon \begin{array}[t]{c c c} 
          \EC^n &\longrightarrow & \EC \\ 
          (p_1, \dots, p_n) &\mapsto & m_1p_1 + \dots +m_np_n 
         \end{array}
\]
with $m_i \in \mathbb{Z}$.
In fact, in this framework the elliptic curve is treated as a topological group $\EC \simeq S^1 \times S^1$ that admits an Hodge structure.

In this work, we provide a more general and natural definition of an elliptic arrangement, which is also sensible to the choice of the curve $\EC$, thereby introducing number-theoretic aspects into the picture.

More precisely, let us recall that every endomorphism of an elliptic curve $\EC$ arises from multiplication by a complex number  lying in a subring $R_\EC$ of a quadratic number field.
The ring of integers $\ZZ$ is always contained in $R_\EC$.
Generically $R_\EC = \ZZ$, but some elliptic curves,  called of \emph{complex multiplication type (CM)}, admit more endomorphisms.
While all the cited papers assume that $m_i \in\ZZ\subseteq \End \EC$, in the present paper we drop such assumption, building a more general family of hypersurfaces. In other words, we define elliptic arrangements as collections of kernels of \textit{arbitrary} morphisms of abelian varieties $\phi \colon \EC^n \to \EC$.
We extend all previous results to this setting, which allows us to address new phenomena that arise when $\EC$ has complex multiplication.

\medskip

In \Cref{sec:Background}, we recall some basic facts about the endomorphism ring $R_\EC := \End \EC$ of an elliptic curve $\EC$ of complex multiplication type. In particular $R_\EC$ is an order in a quadratic imaginary number field.
In \Cref{sec:Arrangements} we introduce hyperplane, toric, and elliptic arrangements.
While the description of the intersections is trivial in the hyperplane case, simple in the toric one, it becomes significantly more complex for the elliptic arrangements.
A matrix $A \in \Mat_{k,n}(R_\EC)$ defines an elliptic arrangement of $k$ divisors in $\EC^n$.
For any $S \subseteq [k]$, we show that the intersection $\mathcal{A}_S$ of the corresponding divisors fits into a short exact sequence:
  \begin{equation}
  \label{eq:SES_introduction}
0 \to \quotient{\ker \res S \ACC}{\ker \res S A} \to \Arrangement_S \to \tor \coker \res S A \to 0.
  \end{equation}
Here, $\pi_S$ denotes the selection of rows of the matrix $A$ indexed by $S$.
In particular, each intersection $\Arrangement_S$ is an extension between an abelian variety and a finite group.
Moreover, the number of connected components (or \textit{layers}) in $\Arrangement_S$ equals $\# \tor \coker \pi_S \circ A$ (see \Cref{lm:NumberOfLayers}).
We also characterize the abelian varieties that can appear as a layer of an elliptic arrangement in $\EC^n$: they are products of elliptic curves $\EC_i$ that are isogenous to $\EC$, with the conductor of $\End{\EC_i}$ dividing that of $\End \EC$ (see \Cref{lm:descr_conn_comp}).

In \Cref{sec:StructureOverEnd(E)}, we study $\Arrangement_S$ as a $R_\EC$-module,
illustrating through examples how the situation is more subtle than considering $\Arrangement_S$ only as $\ZZ$-module.
In particular, we present \Cref{ex:SequenceNotSplits} where the short exact sequence~\eqref{eq:SES_introduction} does not split as $R_\EC$-modules, and we completely characterize when it splits (see \Cref{prop:DiagramChasing}).
Furthermore, the matrix $A$ induces maps $A_\Lambda \colon \Lambda^n \to \Lambda^k$ and $A_R \colon R^n \to R^k$
and we prove in \Cref{lm:CokernelsCoincide} that 
\[ \Lambda^k / \AL(\Lambda^n) \isom R^k / \AR( R^n) \]
as $\ZZ$-modules, thereby filling a gap in the preprint \cite{bm19} (c.f.\ \Cref{re:}).

In \Cref{sec:ComatroidsOverRings}, we recall the notion of \textit{arithmetic matroid}, introduced in \cite{dm13}, 
and show that every elliptic arrangement $\Arrangement$ gives rise to such a structure.
Specifically, the ground set $[k]$, the rank function $\rk_\Arrangement (S)= \dim_\CC \ker \pi_S \circ A_\CC$, and the multiplicity function $m_\Arrangement(S) = \card \tor{\coker \res S A}$ define an arithmetic matroid (\Cref{thm:4EllipticArr-AritMat}).
The proof relies on a new duality construction for elliptic arrangements (\Cref{lm:coker_conjugate_transpose}).
Interestingly, while arithmetic matroids were originally introduced to study toric arrangements,
we show an elliptic arrangement that corresponds to a non-realizable arithmetic matroid (\Cref{ex:NewRealization}).
We also observe that the so-called GCD property holds when $R_\EC$ is Dedekind (\Cref{lm:GCDpropertyForRDedekind}), but not in general.
We briefly relate elliptic arrangements to (poly-)matroids over $R_\EC$, introduced in \cite{fm16}.
Finally, we compute the Euler characteristic of the complement of an elliptic arrangement (\Cref{pr:EulerCharacteristic}), extending Bibby's work \cite{bib15} to the complex multiplication case.

\section{Background on elliptic curves}
    \label{sec:Background}

\subsection{Notation}
We write $[k]$ for $\aset{1,2, \dots, k}$; 
$\card X$ for cardinality of a set; 
$X \cup i$ for $X \cup \aset{i}$; 
SES for Short Exact Sequence;
$\CoCo X$ for connected components of $X$.

\subsection{Elliptic curves with complex multiplication}
    \label{sub:EllipticCurvesWithComplexMultiplication}
Let $\EC$ be a smooth complex Riemann surface of genus one. 
All such $\EC$ are isomorphic to $\faktor \CC  \Lambda$, 
with $\Lambda$ a lattice generated by $1$ and $\tau \in \CC \setminus \RR$,
and group structure induced by $(\CC, +)$.
If $\End \EC \ne \ZZ$ we say that  $\EC$ has complex multiplication type.
We recall some properties of $\End \EC$ and $\Lambda$;
for details and introductory references, see \cite{Silverman09} or \cite{st15}. 

\subsubsection{The endomorphism ring}
    \label{sub:EndomorphismRing}
We describe a ring $R_\EC$ isomorphic to $\End \EC$.
Denote by $A_\alpha$ the linear map given by multiplication by $\alpha$, i.e.~$z \mapsto \alpha z$.

\begin{lm} 
    \label{lm:EndoRingAsR}
    For an elliptic curve $\EC = \CC / \Lambda$, the ring $\End \EC$ is isomorphic to \[R_\EC = \aset{\alpha \in \Lambda \suchthat \alpha \Lambda \subset \Lambda}\]
    via the map $R_\EC \to \End \EC$ defined by $\alpha \mapsto A_\alpha$.
\end{lm}

\begin{proof}[Sketch of proof]
    One shows that if $f \colon \faktor \CC  \Lambda \to \faktor \CC  \Lambda$ is an endomorphism,
    then an analytic continuation $\hat f \colon \CC \to \CC$ of $f$ around 0 is of the form $\hat f(z) = \alpha z$. 
    Moreover, $\alpha$ must have the property that $\alpha \Lambda \subset \Lambda $ for $f$ being well defined when descending to $\faktor \CC  \Lambda$. 
    See Proposition~6.17 of \cite{st15} for details.
\end{proof}

We write $R$ instead of $R_\EC$ when $\EC$ is clear from the context.

\subsubsection{A quadratic relation for \texorpdfstring{$\tau$}{tau}}
    \label{sub:AQuadraticRelationForTau}
    Assume that $\End \EC \not \isom \ZZ$ and consider $\alpha$ in $R$.
    By \Cref{lm:EndoRingAsR} we have $\alpha \Lambda \subset \Lambda$.
    Equivalently, $\alpha \cdot 1 \in \Lambda$ and  $\alpha \cdot \tau \in \Lambda$.
    Having $\alpha \cdot 1 \in \Lambda$ gives $R \subset \Lambda$,
    so we can write $\alpha = x + y\tau$ with $x, y \in \ZZ$.
    Thus, $\alpha \cdot \tau = x\tau + y\tau^2$.
    Hence, the second condition $\alpha \cdot \tau \in \Lambda$ is true if and only if 
    $y\tau^2$ is in $\Lambda$.
    That is, for some $h, k \in \ZZ$ we have 
    \begin{align} 
        \label{eq:tauisquadratic}
      y\tau^2 = h\tau + k.
    \end{align}
    \begin{lm} 
        \label{lm:RIsAnOrder}
        If $\EC = \CC / \langle 1, \tau \rangle$ has $\End \EC \not \isom \ZZ$, then $\QQ[\tau]$ is a quadratic imaginary number field and $R_\EC$ is an order, i.e.\ a subring of the ring of integers $\calO_{\QQ[\tau]}$ of rank two.
    \end{lm}

    \begin{proof} 
        If $y=0$ in Equation~\eqref{eq:tauisquadratic}, then $\alpha = x$ is an integer. 
        So $\End \EC \not \isom \ZZ$ is equivalent to the existence of a non-zero choice for $y$, which gives the quadratic relation $y\tau^2 - h\tau - k = 0$ for $\tau$.
        Moreover, $\tau$ is non-real because otherwise $\Span_\ZZ \aset{1, \tau}$ would not be a lattice in $\CC$.
        This proves the statement about $\QQ[\tau]$.

        Recall that the ring of integers $\calO$ is the subring of algebraic numbers $\alpha \in \QQ[\tau]$ whose minimal polynomial $f_\alpha$ over $\ZZ$ is monic.
        To see that $\alpha \in R_\EC$ is an algebraic integer,
        multiply Equation~\eqref{eq:tauisquadratic} by $y$ to get the monic relation $(y\tau)^2 -h(y\tau) - yk = 0$, thus $y\tau \in \calO$.
        Since $x \in \ZZ$, we have that $\alpha = x + y\tau$ is in $\calO$ as desired.
    \end{proof}
    
\subsubsection{Quadratic number fields}
    \label{sub:QuadraticNumberFields}
In view of \Cref{lm:RIsAnOrder}, we recall that
every imaginary quadratic number field can be obtained via a unique choice of a square-free positive integer $m$:
let $K_m = \QQ(\sqrt{-m})$ be the field containing $\tau$. 
Consider
\begin{align}
  \label{eq:CasesOmega}
\omega =
\begin{cases}
  \frac{1 + \sqrt{-m}}{2} & \text{ if } m \equiv 3 \mod 4,\\
  \sqrt{-m} & \text{ otherwise,}
\end{cases}
\end{align}
and recall that $\calO_m = \ZZ[\omega]$ is the ring of integers of $K_m$.
In the first case we have the minimal polynomial $f^\ZZ_\omega(\omega) = \omega^2-\omega+m'= 0$ 
where $4m'-1 = m$, and in the second case $f^\ZZ_\omega(\omega) = \omega^2 +m = 0$.  
Since $\tau$ is in  $K_m$, we can choose integers $a,b,c$ with $\gcd(a,b,c) = 1$ such that $\tau = \frac{a+b\omega}{c} \in K_m$.
Calculate
\begin{align}
    \trace A_\tau &= \begin{cases}
    (2a+b)/c & \text{ if } m \equiv 3 \mod 4,\\
  2a/c & \text{ else,}
\end{cases}\\
        \det A_\tau &=\begin{cases}
  (a^2 + ab +b^2m')/c^2 & \text{ if } m \equiv 3 \mod 4,\\
  (a^2 + b^2m)/c^2 & \text{ else;}
\end{cases}
\end{align}
where $\trace A_\tau$ and $\det A_\tau$ are the trace and determinant of the linear map $z \mapsto \tau z$ .
Thus, the minimal polynomial over $\QQ$ of $\tau$ is the characteristic polynomial of $A_\tau$:
\begin{align} 
    \label{eq:MinPolyTau}
    f^\QQ_\tau(\tau) = \tau^2 - \trace A_\tau \tau + \det A_\tau.
\end{align}

\subsubsection{Generators for \texorpdfstring{$R_\EC$}{R}}
    \label{sub:GeneratorsForRE}
    Consider the isomorphism $(x, y) \mapsto x \cdot 1 + y \cdot \tau$ between $\ZZ^2$ and $\Lambda$. 
    Since $R$ is a subgroup of $\Lambda \isom \ZZ^2$ with $1 \in R$, 
    we conclude that $R$ is equal to $\langle 1, N \tau \rangle$ for some non-zero integer~$N$.
    As a consequence of Equation~\eqref{eq:MinPolyTau} and the fact that $N\tau^2$ must have integral coordinates in the basis  $\aset{1, \tau}$ we get that:
\begin{lm} 
    \label{lm:MinimalPolynomialTau}
    We have that $R = \langle 1, N \tau \rangle$ 
    if and only if 
    $N f^\QQ_\tau$ equals the primitive minimal polynomial  $f^\ZZ_\tau$ of $\tau$ over $\ZZ$.
\end{lm}

 

We assume that $N$ is positive, and calculate:

\begin{lm} 
    \label{lm:FormulaForN}
We have that  
\[
   N = c^2/\gcd(c, c^2\det A_\tau).
\]
\end{lm}

\begin{proof} 
    This follows from clearing up denominators from $\trace A_\tau$ and  $\det A_\tau$ in Equation~\eqref{eq:MinPolyTau}. 
    The equalities
    \begin{align*}
        \gcd(c^2, (2a+b)c, a^2+ab+b^2m') &= \gcd(c, a^2+ab+b^2m') & & \text{ if } m\equiv 3 \mod 4, \\
        \gcd(c^2, 2ac, a^2+b^2m) &= \gcd(c, a^2+b^2m) && \text{ otherwise};
    \end{align*}
    hold by the hypothesis $\gcd(a,b,c)=1$ and the claimed result follows.
\end{proof}

\begin{re} 
    \label{re:OrdersOfNumberRings}
    An order in a number field $K$ is a subring whose additive group is finitely generated of rank equal to $[K:\mathbb Q]$.
    Thus, $R_\EC$ is an order in $K_m$.

\end{re}

\section{Arrangements}
    \label{sec:Arrangements}
    A central \textit{abelian arrangement} $\Arrangement$ is a collection of codi\-men\-sion-1 Lie subgroups
    $H_1, \dots, H_k$ in $G^n$ where $G$ is an abelian Lie group. 
    The subgroups $H_i$ are of the form $\ker \phi_i$ for some $\phi_i \in \Hom (G^n,G)$.
    Let $\CM(\Arrangement) = G^n \setminus \bigcup_{i \in [k]} H_i $ be the complement;
    the study of the topology of $\CM(\Arrangement)$ is one of the motivations to study $\Arrangement$.
    We recall the affine and toric cases, 
    to motivate our elliptic setting.

\subsection{Abelian arrangements}
\subsubsection{Hyperplane arrangements}
    \label{subsub:HyperplaneArrangements}
    Here $G = (K,+)$ for some field $K$ and $\phi_i$ is some functional in $\Hom(K^n, K)$, thus $H_i$ is a hyperplane.
A result by Orlik and Solomon establishes that an important combinatorial invariant of $\Arrangement$ is the \emph{lattice of intersections} $\calL(\Arrangement)$, 
i.e.~affine spaces $\aset{\bigcap_{i \in S} H_i \suchthat S \subset [k]}$ partially ordered by reverse inclusion.  
From $\calL(\Arrangement)$ one can reconstruct some topological data of the complement $\CM(\Arrangement)$: 
cohomology when $K = \mathbb C$, 
number of connected components when $K = \RR$,
number of points when $K$ is a finite field $\FF_q$.
See \cite[Theorem~3.5]{dim17} for an introduction.

It turns out that $\calL(\Arrangement)$ as a lattice is semimodular and atomic, thus cryptomorphically is a matroid, realizable over $K$, on the groundset $[k]$; see \cite[Section~1.7]{oxl06}.
Our philosophy is to generalize the matroid approach.

\subsubsection{Toric arrangements}
    \label{subsub:ToricArrangements}
 Here $G = (\CC^*,*)$ is the multiplicative group and $\phi_i$ is some character in $\Hom((\CC^*)^n, \CC^*)\simeq \ZZ^n$, 
so $H_i$ is a union of subtori of codi\-men\-sion~1. 
We will not consider here the case of subtori of arbitrary codimension, 
which has been studied in \cite{mp22}. 
Write $\Arrangement_S$ for the intersection $\bigcap_{i \in S} H_i$ for some $S \subset [k]$.
In the toric and elliptic cases $\Arrangement_S$ is generally not connected.
We call an element of $\CoCo{\Arrangement_S}$ a \emph{layer}, 
i.e. a connected component of $\Arrangement_S$.
All layers of $\CoCo{\Arrangement_S}$ have the same dimension. 
If $\ell_1$ and $\ell_2$ are layers such that $\card \CoCo{\ell_1 \cap \ell_2} \ge 2$,
there is no unique minimal upper bound of $\ell_1$ and $\ell_2$
in the poset $\aset{\bigcap_{i \in S} H_i \suchthat S \subset [k]}$,
so we do not have a lattice. 
Thus, we call $\calC(\Arrangement)$ the \emph{poset of layers}.

This also means that $\calC(\Arrangement)$ cannot correspond to a matroid. 
This is partially fixed in \cite{moc12a} by considering the triple $([k], \rk, m)$
of the functions $\rk S = \codim A_S$ and $m(S) = \card \CoCo{\Arrangement_S}$.
These satisfy some axioms that are christened an \emph{arithmetic matroid} in \cite{dm13, bm14}.
The arithmetic matroid contains enough information to define an arithmetic Tutte polynomial that,
like in the hyperplane case, yields important invariants as evaluations \cite{moc12a}.
For example:
the Poincare polynomial, whose coefficients encode the Betti numbers of the complement $\CM(\Arrangement)$; 
the characteristic polynomial of $\calC(\Arrangement)$, associated with any poset and containing homological information of its order complex. 

\subsubsection{Elliptic arrangements}
    \label{subsub:EllipticArrangements}
    Here $G = \EC$ for some elliptic curve $\EC = \CC / \Lambda$ and $\phi_i$ is in $\Hom(\EC^n, \EC)$.
    By \Cref{sub:EllipticCurvesWithComplexMultiplication} we regard $\phi_i$ as a scalar product $\phi_i(p) = \langle \alpha_i, p \rangle$ with $\alpha_i = (\alpha_{i1}, \dots, \alpha_{in}) \in R^n$.
    Let $A$ be the matrix $(\alpha_{ij})$.
    It gives rise to maps $\AR \colon  R^n \to R^k$, $\AL \colon \Lambda^n \to \Lambda^k$, $\ACC \colon \CC^n \to \CC^k$ and $\AEC \colon \EC^n \to \EC^k$, 
    and for convenience if we omit the subscript then we mean $\AL$.
 
Like in the toric case, we are interested in the number of layers in $\Arrangement_S$ for $S \subset [k]$.
Note that $\ker \alpha_i = \ker \res i  \AEC$ is the $i$-th subvariety in $\Arrangement$.
Thus, $\Arrangement_S = \bigcap_{i \in S} H_i = \ker \res S \AEC$.
We claim the following identity:

\begin{lm} 
    \label{lm:NumberOfLayers}
    Let $\Arrangement$ be an elliptic arrangement in $\EC^n$. 
    For all $S \subset [k]$ the number of layers in $\Arrangement_S$ is:
 \[
     m(S) := \card \CoCo{\Arrangement_S} = \card \tor{\coker \res S \AL}.
 \] 
\end{lm}

For the proof, we find a short exact sequence with the middle term $\Arrangement_S$ such that the sequence splits, giving us a decomposition of $\Arrangement_S$ from which we derive the result. 
Consider the following diagram, where the second and third rows describe the elliptic arrangement
and the first and fourth make an exact sequence, with $\partial$ the map obtained via the snake lemma:    
\begin{figure}[ht!]
 \center

    \begin{tikzpicture}
\matrix[matrix of math nodes,column sep={70pt,between origins},row
sep={30pt,between origins},nodes={asymmetrical rectangle}] (s)
{
    |[name=000]| 0 &|[name=ka]| \ker A &|[name=kb]| \ker \ACC &|[name=kc]| \Arrangement_S \\
|[name=00]| 0 &|[name=A]| \Lambda^n &|[name=B]| \CC^n &|[name=C]| \EC^n &|[name=01]| 0 \\
|[name=02]| 0 &|[name=A']| \Lambda^S &|[name=B']| \CC^S &|[name=C']| \EC^S  &|[name=03]| 0  \\
              &|[name=ca]| \coker A &|[name=cb]| { \coker \ACC}   &|[name=cc]| {\coker \AEC } & |[name=06]|  0. \\
};
\draw[->] 
          (000) edge (ka)
          (00) edge (A)
          (02) edge (A')
          (ka) edge (A)
          (kb) edge (B)
          (kc) edge (C)
          (ka) edge (kb)
          (A) edge (B)
          (A') edge node[auto] {\(\iota \)} (B');
\draw[->] 
          (C') edge (03)
          (cc) edge (06)
          (B) edge node[auto] {\(\)} (C)
          (A') edge (ca)
          (B') edge (C')
          (C) edge (01)
          (cb) edge (cc)
          (B') edge (cb)
          (C') edge (cc);
\draw[->]         
          (A) edge node[auto] {\(\res S A\)} (A')
          (B) edge node[auto] {\(\res S \ACC\)} (B')
          (C) edge node[auto] {\(\res S \AEC \)} (C')
          (kb) edge (kc)
          (kb) edge (kc)
;
\draw[->] 
               (ca) edge node[auto] {\(\bar \iota \)}  (cb)
;
\draw[->,gray,rounded corners] (kc) -| node[auto,text=black,pos=.7]
{\(\partial\)} ($(01.east)+(.5,0)$) |- ($(B)!.35!(B')$) -|
($(02.west)+(-.5,0)$) |- (ca);
\end{tikzpicture}
            \renewcommand{\figurename}{Diagram}
            \caption{}
            \label{dia:SnakeLemma}
            \renewcommand{\figurename}{Figure}
\end{figure}

To simplify notation, for the remainder of this subsection we write $A$ instead of $\res S A$.

We write $\rad \im A = \Lambda^S \cap \im A $ for the \emph{radical} of $\im A$, 
i.e.~the elements in $\Lambda^S$ such that they have a non-zero multiple in $\im A$.
We obtain our desired short exact sequence:
\begin{lm} 
    \label{lm:SESNumberLayers}
    Let $\Arrangement$ be an elliptic arrangement in $\EC^n$. 
    For all $S \subset [k]$ we have the following SES:
 \begin{equation}
          0 \rightarrow \faktor{\ker \ACC}{\ker \AL} \rightarrow \Arrangement_S \rightarrow \im \partial \rightarrow 0.
			\label{SES:NumberOfComponents}
 \end{equation}
Moreover, this sequence splits as $\ZZ$-modules.
\end{lm}

\begin{proof} 
    From the first and fourth row of Diagram~\ref{dia:SnakeLemma} and the snake lemma we readily get the SES~\eqref{SES:NumberOfComponents}.
Note that $\faktor{\ker \ACC}{\ker \AL}$ is a divisible abelian group, 
thus as a $\ZZ$-module it is injective and the sequence splits as $\ZZ$-modules.
\end{proof}

\begin{proof}[Proof of \Cref{lm:NumberOfLayers}] 
The vector space $\ker \ACC \isom \CC^{n-r}$ is connected and so the quotient $\faktor{\ker \ACC}{\ker A}$ is too.
From \Cref{lm:SESNumberLayers} the number of connected components of $\Arrangement_S$ is equal to the one of $\im \partial$.

The result follows from $\rad \im (A_\Lambda)= (\im A_\CC) \cap \Lambda^S$ because 
\[\im \partial = \faktor{(\im A_\CC) \cap \Lambda^S}{\im A_\Lambda} \text{ and } \tor \coker A_\Lambda = \faktor{\rad \im (A_\Lambda)}{\im (A_\Lambda)}.\]
Consider the case of CM elliptic curve, let $K=\operatorname{frac}(R)$ and observe that $K^S \cap \im A_\CC  = \im A_K$, moreover every element in $K$ has a multiple in $\Lambda$, hence $\Lambda^S \cap\im A_K = \rad \im A_\Lambda$.
The proof for non CM elliptic curve is similar and we omit it.

Therefore $\im \partial = \tor{\coker A}$, from which the result follows.
\end{proof}

The behaviour of the SES~\eqref{SES:NumberOfComponents} is more intricate when regarded as $R$-modules.
This is explored in \Cref{sub:ModulesOverR}, including an example where the sequence does not split as $R$-modules.

\subsection{Description of connected components}
Now we focus on the description of the connected component of the identity of $\Arrangement_S$ as an abelian variety.

\begin{de} 
    \label{de:conductor}
 The conductor of $R_\EC$ is $f_\EC  := [\mathcal{O}:R_\EC]= \frac{bc}{\gcd(c,c^2\det A_\tau)}$. 
\end{de}

\begin{lm} Let us fix a complex multiplication elliptic curve $\EC$.
\label{lm:descr_conn_comp}
\begin{enumerate}
    \item Every connected component $\faktor{\ker A_\CC}{\ker A_\Lambda}$ of some elliptic arrangement in $\EC^n$ is a product of elliptic curves $\EC_i$ isogenous to $\EC$ such that $f_{\EC_i} \mid f_\EC$;
    \item Viceversa, every product of elliptic curves $\EC_i$ isogenous to $\EC$ such that $f_{\EC_i} \mid f_\EC$ is a connected component of some elliptic arrangement in $\EC^n$,
\end{enumerate}
\end{lm}
\begin{proof}
\begin{enumerate}
    \item
    Consider a connected component of $A_\EC \colon \EC^n \to \EC^k$, by the previous discussion or by \cite[Theorem 4.4 (c)]{AbVarProdEC} every such morphism arises as a morphism of $R_\EC$-module $A_R^T \colon R^k \to R^n$.
    Choosing a set of generators of $\tor \coker A_R^T$, we construct another morphism $B_\EC \colon \EC^n \to \EC^h$ such that $B_R^T \colon R^h \to R^n$ satisfies $\im B_R^T = \rad \im A_R^T$. This implies $\faktor{\ker A_\CC}{\ker A_\Lambda} = \ker B_\EC$ by \cite[Theorem 4.4 (b)]{AbVarProdEC}.
    The result follows immediately from \cite[Theorem 7.5]{AbVarProdEC} applied to $B_\EC$.
    \item Let $X$ be an abelian variety satisfying the hypothesis, by \cite[Theorem 7.5]{AbVarProdEC} it arises from a torsion-free $R$-module $M$. Choose a free presentation of $M$
    \[ R^k \to R^n \to M \to 0 \]
    by \cite[Theorem 4.4 (b)]{AbVarProdEC} it corresponds to a sequence 
    \[ 0 \to X \to \EC^n \to \EC^k\]
    hence $X$ is a layer of an arrangement of $k$ divisors in $\EC^n$.
\end{enumerate}
\end{proof}

\begin{remark}
    The techniques of \cite{AbVarProdEC,KaniCMelliptic} cannot be applied to the intersection of $\Arrangement _S$ but only to a connected component (layer). Indeed the functor $\SheafHom_R(-,\EC)$ that they consider is not fully faithful on torsion modules.
\end{remark}

\section{Modules over \texorpdfstring{$R_\EC$}{Rℰ}}
    \label{sec:StructureOverEnd(E)}
Our central aim is to describe $\Arrangement_S$.
A good deal of our efforts are dedicated to studying the behaviour of the sequence 
 \[
     \zeta \colon 0 \rightarrow \quotient{\ker \ACC}{\ker A} \rightarrow \Arrangement_S \rightarrow \tor{\coker A} \rightarrow 0
\]
as modules over~$R_\EC$. 
We give some conditions under which $\zeta$ splits, and an example in which it does not.

\subsection{An example of non-splitting}
    \label{sub:AnExampleOfNonSplitting}
    If either $\tor{\coker A}$ is projective or $\quotient{\ker \ACC}{\ker A}$ is injective, 
    we get that $\zeta$ splits.
    The former only happens when $\tor{\coker A}$ is trivial, 
    because projective modules are torsion free.
    The latter offers more hope, as $\quotient{\ker \ACC}{\ker A}$ is divisible, 
    so we are done if we regard it over a principal ideal domain, e.g.~as $\ZZ$-module.
    Unfortunately, $R$ is not necessarily a PID,
    and quotients of $\CC^n$ by a lattice are not necessarily injective as $R$-modules;
    we illustrate this now.

\begin{ex} 
    \label{ex:LambdaNotInjective}
    Let $R = \ZZ[\sqrt{-3}]$ and $\Gamma = \left \langle 1, (1 + \sqrt{-3})/2 \right \rangle$.
    We claim that $\quotient \CC  \Gamma$ is not an injective $R$-module. 
    Consider the map $\AR \colon R^2 \to R$ given by $(x,y) \mapsto 2x + (1 + \sqrt{-3})y$.
    The kernel is generated by 
    \[
        \ker \AR = \left \langle 
        v = \begin{psmallmatrix} 2 \\ -1 + \sqrt{-3}  \end{psmallmatrix},  
        w = \begin{psmallmatrix} 1 + \sqrt{-3} \\ -2\end{psmallmatrix}  \right \rangle.
    \]
    Note that $w = (1 + \sqrt{-3})/2v$, 
    so the map $v \mapsto 1$ shows that $\ker \AR$ is isomorphic to $\Gamma$, 
    thus $\quotient \CC \Gamma$ is isomorphic to $\ker \ACC / \ker \AR$.

    Let $\iota \colon \im \AR \to R$ and $\pi \colon \CC \to \quotient{\CC}{\Gamma}$ be the canonical injection and surjection, respectively.
    We construct an $f \in \Hom_R(\im \AR, \quotient{\CC}{\Gamma})$ that cannot be extended to $R \to \quotient{\CC}{\Gamma} $.
    By the first isomorphism theorem, 
    a map $f$ in $\Hom_R(\im \AR, \quotient{\CC}{\Gamma})$ is induced by 
    a map $R^2 \to \quotient \CC \Gamma$ that vanishes on $\ker \AR$. 
    This gives the following conditions:
    \begin{equation}
    \begin{split} 
        \label{eq:sizygies}
        2 f(2) &\equiv (1 - \sqrt{-3}) f(1+\sqrt{-3})  \mod \Gamma, \\
        (1 + \sqrt{-3}) f(2) &\equiv 2 f(1+ \sqrt{-3})  \mod \Gamma.
    \end{split}       
    \end{equation}
    We take the following values:
    \begin{align*} 
      f(2) &= 0 &
      f(1 + \sqrt{-3}) &= \dfrac{1}{ 1 - \sqrt{-3}}.
    \end{align*}
    Suppose $g \colon R \to \quotient \CC \Gamma$ lifts $f$. 
    For some $\gamma \in \Gamma$ we have that
    \[(1 + \sqrt{-3})g(1) =  g(1 + \sqrt{-3}) = f(1 + \sqrt{-3}) = \frac{1}{ 1 - \sqrt{-3}} + \gamma. \]
    Thus, 
    \[
        2g(1) = 2 \left( \frac{1}{ (1+\sqrt{-3}) (1 - \sqrt{-3})} + \frac{\gamma}  {1 + \sqrt{-3}} \right)  = \frac 1 2 + \frac{2\gamma}{1+\sqrt{-3}}.
    \]
    Finally, a quick calculation on the generators verifies that
    $\frac{2}  {1 + \sqrt{-3}} \Gamma = \Gamma$, hence $g(2) \equiv 1/2 \mod \Gamma$, a contradiction.
    Therefore $\quotient \CC  \Gamma \isom \quotient {\ker \ACC}{\ker \AR}$ is not injective.
\end{ex}

The failure of injectivity of $\quotient{\ker A_\CC}{\ker A}$ sets the stage for the failure of the sequence to split, 
which we verify by considering $A$ in the ambient space $(\quotient{\CC}{\ZZ[\sqrt{-3}]})^2$.

\begin{ex} 
    \label{ex:SequenceNotSplits}
    Let $A$ be the matrix from \Cref{ex:LambdaNotInjective}. 
    We study the arrangement defined by $A$ in the ambient space $(\CC / \Lambda)^2$ with $\Lambda = \ZZ[\sqrt{-3}]$.
    That is, the parameters are $m=3$ and $\tau= \sqrt{-3}$.
    Set $S = \aset{1, 2}$, so $\Arrangement_S = \ker \AEC = \aset{z \in \CC \suchthat \ACC(z) \in \ZZ[\sqrt{-3}]  } / (\ZZ[\sqrt{-3}])^2$.
    By \Cref{lm:FormulaForN} we have $N = 1$, so $R$ equals $\ZZ[\sqrt{-3}]$ as well. 
    Thus, $\ker A = \ker A_R$ and \Cref{ex:LambdaNotInjective} tells us that $\quotient{\ker A_\CC}{\ker A}$ is not injective, so there is a chance that $\zeta$ does not split.
    Over $\ZZ$ we have
    \begin{align}   
        \label{eq:NonRealizableAsToric}
        \AZZ = 
      \begin{pmatrix} 2 & 0 & 1 & -3 \\ 
            0 & 2 & 1 & 1 
          \end{pmatrix}.
    \end{align} 
    Hence, $\card{\tor{\coker A}} = \gcd(2 \times 2\text{ minors of } A) = 2$ and so $\tor \coker A \isom \quotient \ZZ {2\ZZ}$.
    This suggests to take $\zeta$ and look at elements of order~2 in each of the modules.
    Given a group $X$, write $X[2]$ for its 2-torsion, i.e.~elements $x$ such that $2x = 0$.
    First, we have $(\tor \coker A)[2] = \tor \coker A \simeq R/I$ where $I=(2,1+\sqrt{-3})$.
    Since the short exact sequence $\zeta$ splits as $\ZZ$-module, we have
    \[
      \zeta[2] \colon 0 \to \quotient{\ker \ACC}{\ker A}[2] \to \Arrangement_S[2] \to \tor \coker A[2] \to 0
    \] 
    as $R$-modules.
    Since $\quotient{\ker \ACC}{\ker A}$ is an elliptic curve, its order-2 points are generated by $v/2$ and $w/2$, with $v$ and $w$ as in \Cref{ex:LambdaNotInjective}.
    Namely,
   \[  (\quotient{\ker \ACC}{\ker A})[2] = \aset{
        \begin{psmallmatrix} 0 \\   0 \end{psmallmatrix},  
        \begin{psmallmatrix} 1 \\   (-1 + \sqrt{-3})/2  \end{psmallmatrix},  
        \begin{psmallmatrix} (1 + \sqrt{-3})/2 \\ -1\end{psmallmatrix}  
        \begin{psmallmatrix} (3 + \sqrt{-3})/2 \\ (-3 + \sqrt{-3})/2  \end{psmallmatrix}  }
        \simeq \left( \faktor{R}{I} \right) ^2. \]
    We have that $\card \Arrangement_S[2] = (\card \quotient{\ker \ACC}{\ker A}[2]) \cdot (\card \tor \coker A[2]) = 8$ since $\zeta[2]$ is an exact sequence over $\ZZ$.
    We already have four elements coming from the injection of $\quotient{\ker \ACC}{\ker A}$, plus the element $(1/2, 0)$ we had found before, so we compute:
    \begin{align*} 
        (\Arrangement_S)[2] = & \left \{ 
        \begin{psmallmatrix} 0 \\   0 \end{psmallmatrix},  
        \begin{psmallmatrix} 0 \\   (-1 + \sqrt{-3})/2  \end{psmallmatrix},  
        \begin{psmallmatrix} (1 + \sqrt{-3})/2 \\ 0\end{psmallmatrix}  
        \begin{psmallmatrix} (1 + \sqrt{-3})/2 \\ (-1 + \sqrt{-3})/2  \end{psmallmatrix} \right. ,   \\
         & \left.  \phantom{\{} \begin{psmallmatrix} 1/2 \\   0 \end{psmallmatrix},  
        \begin{psmallmatrix} 1/2 \\   (-1 + \sqrt{-3})/2  \end{psmallmatrix},  \begin{psmallmatrix} \sqrt{-3}/2 \\ 0\end{psmallmatrix}, \begin{psmallmatrix} \sqrt{-3}/2 \\ (-1+\sqrt{-3})/2  \end{psmallmatrix} \right \} \simeq \faktor{R}{I} \times \faktor{R}{(2)}.
    \end{align*}
    In particular, $\zeta[2]$ does not split.
    Therefore $\zeta$ does not split, as a splitting of $\zeta$ would give a splitting of $\zeta[2]$.
\end{ex}

\subsection{Splitting of \texorpdfstring{$\Arrangement_S$}{A sub S} }
    \label{sub:DoesTheSESSplit}
    Choose $S \subseteq [k]$ and for brevity write $A$ instead of $A_S$.
    We now relate the splitting of the sequence 
 \[
     \zeta \colon 0 \rightarrow \faktor{\ker \ACC}{\ker A} \rightarrow \Arrangement_S \rightarrow \tor{\coker A} \rightarrow 0
\]
    with the splitting of the sequence
 \[
	 \eta \colon 0 \rightarrow \ker A \rightarrow \Lambda^n \rightarrow \im A \rightarrow 0.
\]
    The left term of $\zeta$ is described by the sequence
 \[
     \mu \colon 0 \rightarrow \ker A \rightarrow \ker \ACC \rightarrow  \faktor{\ker \ACC}{\ker A}  \rightarrow 0,
\]
    and the right one by 
 \[
	 \nu \colon 0 \rightarrow \im A \rightarrow \rad \im A \rightarrow  \tor \coker A   \rightarrow 0.
\]
Recall that $\zeta$ and $\eta$ correspond to classes in the ext groups $\Exti 1 {\tor \coker A} {\quotient{\ker \ACC}{\ker A} }$ and $\Exti 1 {\ker A} {\im A}$, respectively. 
We relate these two groups:

\begin{lm} 
    \label{lm:Diagramone}
    The Diagram~\ref{dia:BigDiagram} of exact sequences commutes.

\begin{figure}[ht!]
 \center
\begin{tikzcd}[scale=0.9,column sep=tiny]
& 0 \arrow[d]                                                                  & {\Hom(\im A, \ker A)} \arrow[d]                      &   \\
0 \arrow[d] \arrow[r]                              & {\Hom(\tor{\coker A}, \quotient {\ker \ACC}{\ker A})} \arrow[d] \arrow[r]   & \Exti 1 {\tor{\coker A}}{\ker A} \arrow[d] \arrow[r] & 0 \\
{\Hom(\rad \im A, \ker \ACC)} \arrow[r] \arrow[d] & {\Hom(\rad \im A, \quotient {\ker \ACC}{\ker A})} \arrow[r] \arrow[d]       & \Exti 1 {\rad \im A}{\ker A} \arrow[d] \arrow[r]     & 0 \\
{\Hom(\im A, \ker \ACC)} \arrow[r] \arrow[d]      & {\Hom(\im A, \quotient {\ker \ACC}{\ker A})} \arrow[r] \arrow[d]            & \Exti 1 {\im A}{\ker A} \arrow[d] \arrow[r]          & 0 \\
0 \arrow[r]                                        & \Exti 1 {\tor{\coker A}}{\quotient {\ker \ACC}{\ker A}} \arrow[r] \arrow[d] & \Exti 2 {\tor{\coker A}}{\ker A} \arrow[d]    \arrow[r]       &  0 \\
& \Exti 1 {\rad \im A }{\quotient {\ker \ACC}{\ker A}} \arrow[r]               & \Exti 2 {\rad \im A}{\ker A}                         &  
\end{tikzcd}
            \renewcommand{\figurename}{Diagram}
            \caption{}
			\label{dia:BigDiagram}
            \renewcommand{\figurename}{Figure}
\end{figure}
\end{lm}
\begin{proof} 
    Combine $\mu$ and $\nu$ using the bifunctoriality of $\Hom(-, -)$.
    Most of the zeros follow from the fact that $\ker \ACC$ is a $\CC$-vector space, 
    thus is an injective $R$-module,
    hence $\Exti 1 - {\ker \ACC}$ is zero.
    Moreover, $\Hom(\tor \coker A, \ker \ACC)$ is zero because $\ker \ACC$ being a $\CC$-vector space has trivial torsion.
\end{proof}
    Looking at the middle term of the 4th row of Diagram~\ref{dia:BigDiagram},
    if there were an $f \in \Hom(\im A, \quotient{\ker \ACC}{\ker A} )$ that lifts both $\zeta$ and $\eta$, 
    we could perform diagram chasing to relate $\zeta$ and $\eta$.
    Since $\ACC$ is a map of vector spaces, there exists a section $s \colon \im \ACC \to \CC^n$ with $\ACC \circ s = \id_{\im \ACC}$.
    Consider $f \colon \im A \to \quotient{\ker \ACC}{\ker A}$ given by
\[
  A\lambda \mapsto s(A\lambda) - \lambda.
\]
    This is well defined because for another $\lambda'$ such that $A\lambda = A\lambda'$ 
    we have that $A(\lambda - \lambda') = 0$, so $\lambda - \lambda'$ is in $\ker A$ and $(s(A\lambda) - \lambda) - (s(A\lambda') - \lambda') \equiv 0 \mod \ker A.$ 
We show that $f$ is mapped on the one hand to $[\zeta]$, and on the other to~$-[\eta]$.

\begin{lm} 
    \label{lm:fMapsToZeta}
In Diagram~\ref{dia:BigDiagram} we have that $f \mapsto [\zeta]$. 
\end{lm}

\begin{proof} 
    Let $Y$ be the pushout of $f \colon \im A \rightarrow \quotient{\ker \ACC}{\ker A}$ and $\iota \colon \im A \rightarrow \rad \im A$.
    By \cite[Lemma~7.28]{rot08} the following diagram commutes
\begin{figure}[ht!] \center
\adjustbox{scale=1}{%
\begin{tikzcd}
0 \arrow[r] & \im A \arrow[r, "\iota"] \arrow[d, "f"]               & \rad \im A \arrow[r] \arrow[d] & \tor \coker A \arrow[r] \arrow[d, equal] & 0 \\
0 \arrow[r] & \quotient{\ker \ACC}{\ker A} \arrow[r] & Y \arrow[r]                    & \tor \coker A \arrow[r]           & 0.
\end{tikzcd}
}
            \renewcommand{\figurename}{Diagram}
            \caption{}
			\label{dia:fMapsToZeta}
            \renewcommand{\figurename}{Figure}
\end{figure}
Applying the functor $\Hom(-, \quotient{\ker \ACC}{\ker A})$ gives that $f$ maps to the class of the bottom row in the corresponding ext group.
Thus, we are done if the bottom row is equivalent to $\zeta$. 
    We deal first with the square on the left. 
    Consider the diagram:
\begin{center}
\begin{tikzcd}[column sep = tiny, row sep = small]
\im A \arrow[r] \arrow[d]                                                 & \rad \im A \arrow[d] \arrow[rdd, "s", bend left] &                  \\
\quotient{\ker \ACC}{\ker A} \arrow[r] \arrow[rrd, "\iota"', bend right] & Y \arrow[rd, "g"]                                     &                  \\
   &              & \Arrangement_S
\end{tikzcd}
\end{center}
    The diagram commutes:
    take an arbitrary element $A\lambda$ in $\im A$. 
    Going right and then down we have $A \lambda \mapsto s(A\lambda)$; down and right gives $A \lambda \mapsto s(A \lambda) - \lambda$.
    The difference of both images is $s(A \lambda) - (s(A \lambda) - \lambda) = \lambda \equiv 0 \mod \Arrangement_S \subset \CC^n/\Lambda^n$.
    Thus, the outer square commutes and by the universal property of the pushout the map $g\colon Y \mapsto \Arrangement_S$ exists.
    We argue that $g$ is an isomorphism.
    
    Recall that $Y$ can be taken equal to $\quotient{\ker \ACC}{\ker A} \oplus \rad \im A$ 
    quotiented by the submodule $\langle (-f(\lambda), \lambda) \mid \lambda \in \im A \rangle$.
    In this presentation $g$ is given by $(z, \lambda) \mapsto z + s(\lambda)$.
    
    Surjectivity of $g$: take an element in $\Arrangement_S$ with representative $z \in \CC^n$. 
    Thus, $\ACC(z) \in \Lambda^S$ and $\lambda := \ACC(z) \in \rad \im A = \Lambda^S \cap \im A_\CC$.
    In particular, $z-s(A_\CC(z)) \in \ker A_\CC$ and 
    \[ g((z-s(A_\CC(z)), A_\CC(z)))= z-s(A_\CC(z)) + s(A_\CC(z)) = z.\]
    
    Injectivity of $g$: 
    take $z \in \ker \ACC$ and $\lambda \in \rad \im A$ and
    suppose that $(z, \lambda) \in \ker g$, i.e.~$z + s(\lambda) = \mu$ for some $\mu$ in $\Lambda^n$.
    Since $z \in \ker \ACC$, we have $\lambda = \ACC \circ s (\lambda) = A(\mu)$.
    So $\lambda$ is in $\im A$; also $f(\lambda) = s(\lambda) - \mu = -z$.
    Thus, $(z, \lambda) = (-f(\lambda), \lambda) \equiv 0$ in $Y$, as desired.
    
    To prove the equivalence it remains to show that $Y \rightarrow \tor \coker A$ equals $\partial \circ g$.
    Given $(z, \lambda)$ as before, the former map sends it to $\lambda$ in $\tor \coker A$.
    The latter map first sends it to $z +s(\lambda)$, 
    and then $\partial$ sends it to $\ACC(z + s(\lambda))$, which equals $\lambda$.
\end{proof}
The above lemma implies that the short exact sequence $\pi^*\zeta$ always splits, where $\pi \colon \rad \im A \to \tor \coker A$. However, we already know this fact because it is equivalent to the existence of the section $s$.

\begin{lm} 
    \label{lm:fMapsToEta}
    In Diagram~\ref{dia:BigDiagram} we have that $f \mapsto -[\eta]$. 
\end{lm}

\begin{proof} 
    Let $X$ be the pullback of $-f : \im A \rightarrow \quotient{\ker \ACC}{\ker A}$ and $\iota : \ker A \rightarrow \ker \ACC$.
    By \cite[Lemma 7.29]{rot08} the following diagram commutes: 
\begin{figure}[ht!]
 \center
\begin{tikzcd}
0 \arrow[r] & \ker A \arrow[r] \arrow[d, equal] & X \arrow[r] \arrow[d] & \im A \arrow[r] \arrow[d, "-f"]               & 0 \\
0 \arrow[r] & \ker A \arrow[r]           & \ker \ACC \arrow[r]  & \quotient{\ker \ACC}{\ker A} \arrow[r] & 0

\end{tikzcd}
    \renewcommand{\figurename}{Diagram}
    \caption{}
	\label{dia:fMapsToEta}
    \renewcommand{\figurename}{Figure}
\end{figure}

    The functor $\Hom(\im A, -)$ maps $-f$ to the class of the bottom row in the corresponding $\Ext^1$.
    Thus, we are done if the top row is equivalent to $\eta$. 
    We deal first with the square on the right. 
    Consider the diagram: 
\begin{center}
\begin{tikzcd}[column sep = tiny, row sep = small]
& \ker \ACC \arrow[r]  & \quotient{\ker \ACC}{\ker A} \\
\hat f                                                                    & X \arrow[r] \arrow[u] & \im A \arrow[u, "-f"]               \\
\Lambda^n \arrow[ru, "h"] \arrow[ruu, bend left] \arrow[rru, "A"', bend right] &                       &                              
\end{tikzcd}
\end{center}
where $\hat f$ sends $\lambda \in \Lambda^n$ to $\lambda - s(A\lambda)$;
this $\hat f$ makes the diagram commute.
Thus, by the universal property of the pullback we have a map $h : \Lambda^n \mapsto X$.

Recall that $X$ can be taken to be the sub-module of $\ker \ACC \oplus \im A$ of pairs $(z, A\lambda)$ such that $z \equiv -f(\lambda) \mod \ker A$, and so $h$ maps $\lambda$ to $(\hat f(\lambda), A\lambda)$.
Suppose the latter pair is $(0,0)$ in $X$, so $A\lambda = 0$, and $0 = \hat f(\lambda) = \lambda - s(A\lambda)  = \lambda$,
which proves injectivity.
On the other hand, given an arbitrary element $(z, A\lambda)$ of $X$, we have  $z \equiv -f(\lambda) \mod \ker A$ and $z = \lambda - s(A\lambda) + \mu$ for some $\mu \in \ker A$.
Therefore $\hat f(\lambda + \mu) = \lambda + \mu - f(A(\lambda + \mu)) = \lambda - s(A\lambda) + \mu = z$, thus $\lambda - \mu$ maps to $(z, A \lambda)$, proving surjectivity.

Lastly, $\lambda \in \ker A$ gets mapped to $(\hat f(z), A\lambda) = (\lambda, 0)$ in $X$, showing that the bottom row is equivalent to  $\eta$, so $-f$ maps to $[\eta]$.
Since $\Ext^1$ is a group, we conclude that $f$ maps to $-[\eta]$.
\end{proof}

    In the following write $\pi \colon \ker \ACC \to \quotient{\ker \ACC}{\ker A}$ and $\iota \colon \im A \to \rad \im A$ for the canonical projection and immersion, respectively.
    The previous three results give:

\begin{prop} 
    \label{prop:DiagramChasing}
    The element $[\eta]$ is in $\im \Exti 1 {\iota} {\ker A}$ if and only if the sequence $\zeta$ splits.
\end{prop}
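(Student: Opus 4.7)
The plan is to use the two preceding lemmas together with a diagram chase in Diagram~\ref{dia:BigDiagram}. By \Cref{lm:fMapsToZeta,lm:fMapsToEta}, the element $f \in \Hom(\im A, \quotient{\ker\ACC}{\ker A})$ is a simultaneous lift of $[\zeta]$ (through the middle column) and of $-[\eta]$ (through row~4). The key observation is that splitting of $\zeta$ is a statement about when $f$ lies in the kernel of the column connecting morphism, and this can be translated via naturality into a statement about whether $-[\eta]$ is pulled back along $\iota$.

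First, let $\delta_\nu \colon \Hom(\im A, \quotient{\ker \ACC}{\ker A}) \to \Exti 1 {\tor\coker A}{\quotient{\ker \ACC}{\ker A}}$ denote the connecting map in the middle column (obtained from $\nu$). By \Cref{lm:fMapsToZeta}, $\delta_\nu(f) = [\zeta]$, and by exactness of that column, $\ker\delta_\nu = \im \iota^*$, where $\iota^* \colon \Hom(\rad \im A, \quotient{\ker \ACC}{\ker A}) \to \Hom(\im A, \quotient{\ker \ACC}{\ker A})$ is precomposition with $\iota$. Similarly, let $\delta_\mu^{\im}$ and $\delta_\mu^{\rad}$ denote the row connecting maps in rows 4 and 3 respectively. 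By \Cref{lm:fMapsToEta}, $\delta_\mu^{\im}(f) = -[\eta]$. Naturality of the connecting morphism with respect to $\iota$ gives the commuting square
\[
\delta_\mu^{\im} \circ \iota^* \;=\; \Exti 1 \iota {\ker A} \circ \delta_\mu^{\rad}.
\]

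For the forward implication, suppose $[\eta] \in \im \Exti 1 \iota {\ker A}$, so $-[\eta] = \Exti 1 \iota {\ker A}(\xi)$ for some $\xi \in \Exti 1 {\rad \im A}{\ker A}$. Row~3 of the diagram is exact and ends in $0$, so I can lift $\xi = \delta_\mu^{\rad}(\tilde f)$ for some $\tilde f \in \Hom(\rad \im A, \quotient{\ker \ACC}{\ker A})$. The naturality square then yields $\delta_\mu^{\im}(\tilde f \circ \iota) = -[\eta] = \delta_\mu^{\im}(f)$, so $f - \tilde f \circ \iota$ lies in $\ker\delta_\mu^{\im}$. By exactness of row~4, this difference equals $\pi_* g$ for some $g \in \Hom(\im A, \ker \ACC)$, where $\pi_*$ is postcomposition with $\ker \ACC \twoheadrightarrow \quotient{\ker \ACC}{\ker A}$. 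Since $\ker \ACC$ is an injective $R$-module, $g$ extends to $\tilde g \colon \rad \im A \to \ker \ACC$, and hence $\pi_* g = (\pi_* \tilde g) \circ \iota \in \im \iota^*$. Therefore $f \in \im \iota^*$, which gives $[\zeta] = \delta_\nu(f) = 0$.

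For the converse, if $\zeta$ splits then $f \in \ker \delta_\nu = \im \iota^*$, so $f = \tilde f \circ \iota$ for some $\tilde f \in \Hom(\rad \im A, \quotient{\ker \ACC}{\ker A})$. Applying the same naturality square gives
\[
-[\eta] \;=\; \delta_\mu^{\im}(f) \;=\; \delta_\mu^{\im}(\iota^* \tilde f) \;=\; \Exti 1 \iota {\ker A}\bigl(\delta_\mu^{\rad}(\tilde f)\bigr),
\]
so $-[\eta]$, and hence $[\eta]$, lies in $\im \Exti 1 \iota {\ker A}$.

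The main technical point is the naturality of the connecting morphism in $\nu$, which is exactly what makes Diagram~\ref{dia:BigDiagram} commute; given this commutativity, the rest of the argument is a bookkeeping exercise in exact sequences, using crucially that $\ker \ACC$ is injective (so that row~3 surjects onto $\Ext^1$ and any map into $\ker \ACC$ extends across $\iota$).
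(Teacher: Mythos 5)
Your proof is correct, but it chases Diagram~\ref{dia:BigDiagram} along a different path than the paper. The paper's argument is shorter: by \Cref{lm:fMapsToZeta}, \Cref{lm:fMapsToEta} and commutativity of the square spanned by rows~4--5 and columns~2--3, the images of $[\zeta]$ and $[\eta]$ in $\Exti 2 {\tor \coker A}{\ker A}$ coincide; the map $\Exti 1 {\tor \coker A}{\quotient{\ker\ACC}{\ker A}} \to \Exti 2 {\tor \coker A}{\ker A}$ in row~5 is injective (in fact an isomorphism, since the injectivity of $\ker\ACC$ kills both flanking terms), so $[\zeta]=0$ iff its image there vanishes; and by exactness of the third column that vanishing is precisely the statement $[\eta]\in\im\Exti 1 \iota {\ker A}$. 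You instead never leave $\Hom$ and $\Ext^1$: you exploit that $f$ is a simultaneous lift of $[\zeta]$ and $-[\eta]$, invoke naturality of the connecting maps $\delta_\mu$, and use the extension property of the injective module $\ker\ACC$ directly to show $f\in\im\iota^*$. Both arguments hinge on the same structural input (injectivity of $\ker\ACC$), just deployed at different spots of the diagram; yours is a longer but more hands-on chase that stays in degree one throughout, while the paper's $\Ext^2$ argument reads in two lines once the commutativity of the diagram is trusted.
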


\begin{proof} 
Notice that by \Cref{lm:fMapsToZeta} and \Cref{lm:fMapsToEta} the image of $[\eta]$ and the one of $[\zeta]$ in $\Ext^2(\tor \coker A, \ker A)$ coincide.
The sequence $\zeta$ splits if and only if the image of $[\zeta]$ in $\Ext^2(\tor \coker A, \ker A)$ is zero.
The latter is equivalent to $\eta \in \im \Exti 1 {\iota} {\ker A}$.
\end{proof}

As a corollary we get sufficient easy conditions for the splitting of $\zeta$.

\begin{cor} 
    \label{cor:RDedekindZetaSplits}
    If $R$ is Dedekind, then the sequence $\zeta$ splits.
\end{cor}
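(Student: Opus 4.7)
The plan is to exploit the fact that a Dedekind domain has global dimension at most one, so that the obstruction provided by Proposition~\ref{prop:DiagramChasing} vanishes for purely homological reasons.

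More concretely, recall that the sequence $\nu \colon 0 \to \im A \to \rad \im A \to \tor \coker A \to 0$ sits in the long exact sequence
\[
\Exti 1 {\rad \im A}{\ker A} \xrightarrow{\Exti 1 \iota {\ker A}} \Exti 1 {\im A}{\ker A} \longrightarrow \Exti 2 {\tor \coker A}{\ker A}
\]
obtained by applying $\Hom_R(-, \ker A)$ to $\nu$. This is exactly the column in the middle of Diagram~\ref{dia:BigDiagram}. So the question of whether $[\eta] \in \im \Exti 1 {\iota} {\ker A}$ reduces to showing that the image of $[\eta]$ in $\Exti 2 {\tor \coker A}{\ker A}$ vanishes.

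First I would recall the standard fact that for $R$ Dedekind, every $R$-module has projective dimension at most one, equivalently $\Exti i M N = 0$ for all $i \geq 2$ and all $R$-modules $M, N$ (a Dedekind domain is hereditary, so submodules of projectives are projective, and one builds a length-one projective resolution of any module). In particular $\Exti 2 {\tor \coker A}{\ker A} = 0$. Then the map $\Exti 1 \iota {\ker A}$ is surjective, so $[\eta]$ lies in its image, and Proposition~\ref{prop:DiagramChasing} yields that $\zeta$ splits.

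I do not expect any genuine obstacle here: the work was done in building Diagram~\ref{dia:BigDiagram} and in Proposition~\ref{prop:DiagramChasing}. The only point worth flagging is the invocation of hereditariness of Dedekind rings, which is classical and can simply be cited (e.g.\ from any standard homological algebra reference). One could equivalently phrase the argument by saying that $\ker A$, as a finitely generated submodule of the free module $\Lambda^n$, is projective over the Dedekind ring $R$, so $\Exti i - {\ker A} = 0$ for $i \geq 1$; but the cleaner statement is the global-dimension bound, since it is applied to the module on the left of the relevant $\Ext^2$.
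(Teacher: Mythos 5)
Your argument is the same as the paper's: both observe that over a Dedekind ring all $\Ext^2$ groups vanish, so the map $\Exti 1 {\iota}{\ker A}$ in the long exact sequence from $\nu$ is surjective, whence $[\eta]$ lifts and Proposition~\ref{prop:DiagramChasing} gives the splitting. The extra detail you supply on why Dedekind rings are hereditary is standard and does not change the structure of the proof.
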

\begin{proof} 
    If $R$ is Dedekind, all $\Ext^2$ groups vanish, so 
    $\Ext^1(\iota, \ker A)$ is surjective, thus $[\eta]$ lifts.
    Alternatively, if we regard the fifth row of Diagram~\ref{dia:BigDiagram} we see that $\Ext^1(\tor \coker A, \quotient{\ker \ACC}{\ker A})$ vanishes when $\Ext^2$ vanishes.
\end{proof}
If the map $A_S \colon \Lambda^n \to \im A_S$ has a section then the extension $\zeta$ is trivial, indeed:
\begin{cor} 
    \label{cor:EtaSplitsZetaSplits}
    If $\eta$ splits, then the sequence $\zeta$ splits.
\end{cor}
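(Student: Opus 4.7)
The plan is to apply \Cref{prop:DiagramChasing} directly. Recall that $\eta$ splits precisely when its class $[\eta] \in \Ext^1(\im A, \ker A)$ is zero. Since the zero element of any abelian group lies in the image of any group homomorphism into it, $[\eta] = 0$ automatically lies in $\im \Exti 1 {\iota}{\ker A}$. By \Cref{prop:DiagramChasing}, this is equivalent to the splitting of $\zeta$.

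A slightly more concrete perspective: if $\eta$ splits, then $\Lambda^n \isom \ker A \oplus \im A$ as $R$-modules, so the short exact sequence of $R$-modules
\[ 0 \to \ker A \to \ker \ACC \to \quotient{\ker \ACC}{\ker A} \to 0 \]
pulled back along $\iota \colon \im A \hookrightarrow \rad \im A$ can be realized via the direct-sum splitting of $\eta$. Running this through the diagram-chasing that produced \Cref{prop:DiagramChasing} recovers the statement. In particular, no further work beyond invoking the proposition is needed, so the proof is a one-line consequence.

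There is no real obstacle here; the main content has already been packaged in \Cref{prop:DiagramChasing}, and the corollary is simply the observation that the zero class lifts trivially.
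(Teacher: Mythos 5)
Your proof is correct and is essentially identical to the paper's: both observe that $\eta$ splitting means $[\eta]=0$ in $\Ext^1(\im A, \ker A)$, that the zero class trivially lies in the image of $\Exti 1 {\iota}{\ker A}$, and then invoke \Cref{prop:DiagramChasing}.
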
 

\begin{proof} 
    If $\eta$ splits then $[\eta] = 0$ in $\Ext^1(\im A, \ker A)$ and the zero class always lifts.
\end{proof}

    \subsection{The lattice \texorpdfstring{$\Lambda$}{Λ}}
    \label{sub:ModulesOverR}
As $\ZZ$-module we have that $\Lambda \isom \ZZ^2$, thus it is free.
As $R$-module we have that $\Lambda$ is not free because $R \isom \ZZ^2 \isom \Lambda$ as $\ZZ$-modules, 
$\Lambda \isom R$ would be the only option for freeness as $R$-module, 
but evidently this is not the case. 
Clearly $\Lambda$ is not injective either, since it is not divisible.
We show that $\Lambda$ is projective.

\begin{lm} 
    \label{lm:LambdaIsProjective}
   The lattice $\Lambda$ is a projective $R$-module.
\end{lm}

\begin{proof} 
    
    Recall that $R = \ZZ \oplus N\tau \ZZ$ and that $\Lambda = \ZZ \oplus \tau \ZZ$. 
    Since $\Lambda$ is closed under sums and $R \Lambda \subset \Lambda$, it is an $R$-module in $\Quot R$, 
    i.e.~a fractional $R$-ideal.
    Thus,  $\Lambda$ is projective if and only if it is invertible; see e.g.~\cite[Proposition~4.21]{rot08}. 
    We claim that $N \cdot \Lambda \cdot \sigma(\Lambda) = R$, where $\sigma$ is the automorphism of $\QQ[\sqrt{-m}]$ that sends $\sqrt{-m} \mapsto -\sqrt{-m}$. 
    Indeed, 
    \[
        I = N \cdot \Lambda \cdot \sigma(\Lambda) = N \cdot \langle 1, \tau \rangle \cdot \langle 1,  \bar \tau \rangle 
        = N \cdot \langle 1, \tau, \tau + \bar \tau, \tau \bar \tau \rangle.
    \]
    From the expression on the right, we see that $I \subset R$. 
    Moreover, $I$ contains $N(\tau + \bar \tau) = N \trace A_\tau $, also $N \tau \bar \tau = N \det A_\tau$, and $N$.
    By \Cref{lm:FormulaForN} these are the coefficients of the minimal polynomial of $\tau$ over $\ZZ$, so $\gcd(N \trace A_\tau, N \det A_\tau, N) = 1$, which means $1 \in I$ as desired.
\end{proof}

\subsection{Multiplication by \texorpdfstring{$\alpha \in R$}{α ∈ R}}
    \label{sub:MultiplicationByAlpha}
    Let $A \in \Mat_{k \times n}(R_\EC)$ encode an elliptic arrangement in $\EC^n$.
    Consider the induced maps $\AR \colon R^n \to R^k$ and $\AL \colon \Lambda^n \to \Lambda^k$. 
    As in \Cref{sub:EndomorphismRing}, we have $\tau = (a+b\omega)/c$ with $\gcd(a,b,c) = 1$.
    We will show that $R^k/\im A_R$ and $\Lambda^k/\im A_\Lambda$ are isomorphic as abelian groups.
    First, a motivating example that corresponds to smallest non-trivial case. 

\begin{ex}
    \label{ex:LambdaAlphaRAlpha}
When $n=k=1$ we have a map of the form $A_\alpha$ for some $\alpha = x + yN\tau \in R$. 
We claim that
\[
  \Lambda / \alpha \Lambda \isom R / \alpha R
\]
as $\ZZ$-modules.
For this we write $A_\alpha$ as a matrix $A$ in the basis $\aset{1, \tau}$ of $\Lambda$,  
and as a matrix $\tA$ in the basis $\aset{1, N\tau}$, and we compare their Smith normal form. 
Consider the integers $\normTr= c \trace A_\tau$, $\normDet = c^2 \det A_\tau$, $g= \gcd(c, \normDet)$, $c = gc'$ and $\normDet=g\normDet'$.
So  that $N = c^2/g = g(c')^2$, we get
\[
    A = \begin{pmatrix} x & -y \normDet'\\ 
      y g(c')^2 & x + y \normTr c' 
  \end{pmatrix}
  \quad \quad \quad
      \tA = \begin{pmatrix} x & -y \normDet'g(c')^2 \\ 
      y & x + y \normTr c'
  \end{pmatrix}
\]
Clearly $\det A = \det \tA$. We are done if we prove that the g.c.d. of the entries of $A$ and of $\tA$ coincide.  
By Euclidean algorithm this is equivalent to:
\[
  \gcd\left(x, 
  y\gcd\left(
    g(c')^2,
    \normDet',
    \normTr c'
\right) \right)  = 
\gcd\left(x, 
  y\gcd\left(
    1,
    \normDet'g(c')^2,
    \normTr c'
\right) \right) ,
\]
which is true if 
\begin{align} 
 \gcd\left(
    g(c')^2,
    \normDet',
    \normTr c'
\right)  = 1. 
\end{align}
By \Cref{lm:MinimalPolynomialTau} and \Cref{lm:FormulaForN}, 
these three numbers are the coefficients of the minimal polynomial $f_\tau^\ZZ$ of $\tau$ over the integers,
which implies the coprimality.
\end{ex}

Now we consider the general case. 
Consider the basis $\aset{1, \tau}$ of $\Lambda$, each entry of $\AL$ expands into a  $2\times2$ matrix, as in \Cref{ex:LambdaAlphaRAlpha}, to get a matrix in $\Mat_\ZZ(2k,2n)$ representing $\AL$. 
Likewise, the basis $\aset{1, N\tau}$ gives a matrix in $\Mat_\ZZ(2k,2n)$ representing $\AR$.
By reordering the bases, we can write:
\[
    \AL = \begin{pmatrix} X & -Y \normDet'\\ 
      Y N & X + Y \normTr c' 
  \end{pmatrix}
  \quad \quad \quad
      \AR = \begin{pmatrix} X & -Y \normDet'N \\ 
      Y & X + Y \normTr c'
  \end{pmatrix},
\]
with $X, Y \in \Mat_\ZZ(k,n)$. 
We claim that the cokernels of both matrices are isomorphic: 

\begin{lm} 
    \label{lm:CokernelsCoincide}
   Given $\AR \colon R^n \to R^k$ and $\AL \colon \Lambda^n \to \Lambda^k$ we have that
\[
  \Lambda^k / \AL(\Lambda^n) \isom R^k / \AR( R^n)
\]
as additive groups.
\end{lm}
\begin{proof} 
    We regard $R$ and $\Lambda$ as $\ZZ$-modules and we write $A$ for $\AL$ and $\tA$ for $\AR$.
    By the structure theorem for finitely generated modules over PID, it is enough to prove that for all primes $p \in \ZZ$ the localizations 
    $A_{(p)} \colon (\ZZ_{(p)})^{2n} \to (\ZZ_{(p)})^{2k}$ and $\tA_{(p)} \colon (\ZZ_{(p)})^{2n} \to (\ZZ_{(p)})^{2k}$ 
    have isomorphic cokernels.
    This property is preserved by applying elementary row and columns operations.
    We distinguish three cases:
    \begin{description}
        \item [$p \nmid N$] since $N$ is invertible in $\ZZ_{(p)}$ we can multiply the second column of $\tilde{A}_{(p)}$ by $N^{-1}$ and the second row by $N$ and obtain the matrix $A_{(p)}$.
        \item [$p \nmid \delta'$] since $\normDet'$ is invertible in $\ZZ_{(p)}$ we can multiply the second column of $\tilde{A}_{(p)}$ by $-\normDet^{\prime -1}$ and the second row by $-\normDet'$ and obtain the matrix 
        \[ \begin{pmatrix} X & YN \\ 
      -Y \normDet' & X + Y \normTr c' 
  \end{pmatrix} 
  \sim 
  \begin{pmatrix} X & YN+\frac{\normTr c'}{\normDet'} X \\ 
      -Y \normDet' & X
  \end{pmatrix} \sim \begin{pmatrix} X + Y \normTr c'  & YN \\ 
      -Y \normDet' & X
  \end{pmatrix}.
  \] 
        where firstly we added to the second column $\frac{\normTr c'}{\normDet'}$ times the first one and then to the first row $\frac{\normTr c'}{\normDet'}$ times the second one.
        Finally, by exchanging both rows and columns we obtain the matrix $A_{(p)}$.
        \item [$ p \mid N$ and $p \mid \delta'$]
    Let $I_k$ be the $k \times k$ identity, and $I_n$ the $n\times n$ identity. 
    We modify $A$ and $\tA$ so that the integer $N$ is replaced by another integer $N(s)$ coprime with $p$ and then the result follows from the first case.
    We have for some $s \in \ZZ$:
\begin{align*}
  A_{(p)}  & \sim
    \begin{pmatrix} I_k & 0\\ 
      -s I_k & I_k \end{pmatrix}
    \begin{pmatrix} X & -Y \normDet' \\ 
      YN & X + Y \normTr c'\end{pmatrix}
    \begin{pmatrix} I_n & 0\\ 
      s I_n & I_n \end{pmatrix} \\
      &= \begin{pmatrix} X - Ys\normDet'    & -Y \normDet' \\ 
      Y(N+ s \normTr c'+ s^2 \normDet') & X + Y(\normTr c'+s\normDet' )\end{pmatrix}  \\
  \tA_{(p)}  & \sim
    \begin{pmatrix} I_k & -\normDet' s I_k\\ 
      0 & I_k \end{pmatrix}
    \begin{pmatrix} X & -Y \normDet'N \\ 
      Y & X + Y \normTr c'\end{pmatrix}
    \begin{pmatrix} I_n & \normDet' sI_n\\ 
      0 & I_n \end{pmatrix} \\
      &= \begin{pmatrix} X - Ys\normDet'    & -Y \normDet' (N+s\normTr c'+ s^2\normDet')\\ 
      Y & X + Y(\normTr c'+s\normDet')\end{pmatrix} 
\end{align*}
Write $N(s)$ for $N+s\normTr c'+ s^2\normDet' $.   
We are done if there is a choice of $s$ such that $p \nmid N(s)$.
As in \Cref{ex:LambdaAlphaRAlpha}, we have $\gcd(\normDet', \normTr c', N) = 1$, so $p \nmid \normTr c'$, and we can choose any $s \not \equiv 0 \mod p$.
\qedhere
    \end{description}
\end{proof}

\begin{re} 
    \label{re:}
    \Cref{lm:CokernelsCoincide} is used implicitly in the proof of Theorem~6.1 and Theorem~6.2 of \cite{bm19}. 
    They only provide the justification for the 1-dimensional case, which corresponds to our \Cref{ex:LambdaAlphaRAlpha}.
\end{re}

\section{Arithmetic matroid structure}
    \label{sec:ComatroidsOverRings}

\subsection{Arithmetic matroids}
    \label{sub:ArithmeticMatroids}
    Let $E$ be a finite set. 
    A \textit{matroid} on $E$ is given by a function $\rk \colon \powerset E \to \NN$ that satisfies:
    \begin{enumerate}[label=(r\arabic*)]
        \item \label{item:rk_normalized} $ \rk{\varnothing} =  0$,
        \item \label{item:rk_increasing} $\rk X \le \rk(X \cup i) \le \rk X + 1$ for every $X \subset E$ and $i \in E$,
        \item \label{item:rk_submodular} $\rk(X \cup Y) + \rk(X \cap Y) \le \rk X + \rk Y$ for every $X, Y \subset E$.
    \end{enumerate}
    These axioms are an abstraction of the following example, called the \emph{realizable case}:
    given a list $(v_e)_{e \in E}$ of vectors (indexed by $E$) in some finite dimensional vector space  $V$ over some field $K$, 
    set $\rk S = \dim_K \langle v_e \suchthat e \in S \rangle$ for every subset $S \subset E$. The vectors $(v_e)_{e \in E}$ define an arrangement of hyperplanes on the dual vector space $V^*$; the cohomology of the complement of this arrangement can be described explicitly in terms of the matroid. 
    
    On the other hand, the cohomology of the complement of a \textit{toric arrangement} is not determined by the matroid. Let us recall that a toric arrangement is a family of hypersurfaces in a complex torus $T=(\mathbb C^*)^n$, each hypersurface being  defined by a character $\chi\in Hom(T, \mathbb C^*)\simeq \mathbb Z^n$.
    We set $\rk S$ to be the rank of the submodule $\langle \chi_e \suchthat e \in S \rangle_\ZZ$.
    This satisfies axioms~\ref{item:rk_normalized},~\ref{item:rk_increasing}, and~\ref{item:rk_submodular}. 
    Consider also the number $m(S)$ of connected components in the intersection $\bigcap_{i \in S} H_i$.
    The question of axiomatizing $m(S)$ and studying its properties was addressed in \cite{dm13, bm14, dm18}; we now recall some basic facts.
    
    Denote by $[X,Y]$ the interval $\aset{S \subset E : X \subseteq S \subseteq Y}$ in $(\powerset E, \subseteq)$.
    We say that $[X,Y]$ is a \emph{molecule} if we can write $Y$ as a disjoint union $Y = X \sqcup F \sqcup T$ such that for each $S \in [X,Y]$ we have
    \[      \rk(S) = \rk(X) + \card{(X \cap F)}.    
    \]
    This amounts to saying that, after contracting the elements of $X$, the elements of $T$ become loops and the elements of $F$ become coloops.
    Now, an \emph{arithmetic matroid} is a matroid $(E, \rk)$ endowed with a \textit{multiplicity function} $m \colon \powerset E \to \NN$ such that the following axioms are satisfied:
    \begin{enumerate}[label=(A\arabic*)]
     \item \label{item:axiomA1} For all $S \subset E$ and $i \in E $: if $\rk(S \cup i) = \rk(S)$, 
       then $m(S \cup i)$ divides $m(S)$; 
       otherwise $m(S)$ divides $m(S \cup i)$.
      \item \label{item:axiomA2} If $[X,Y]$ is a molecule then
        \[ m(X) m(Y) = m(X \cup F) m(X \cup T).\]        
    \end{enumerate}
    \begin{enumerate}[label=(P)]
      \item \label{item:axiomP}  If $[X,Y]$ is a molecule, $Y = X \sqcup F \sqcup T$, then the number $ \rho(X,Y) $  given by
        \[
        \rho(X,Y) = (-1)^{|T|} \sum_{S \in [X,Y]} (-1)^{|Y|-|S|}m(S)
        \]
        is greater or equal than~0.
   \end{enumerate}
   In the realizable case, Axioms~\ref{item:axiomA1} and~\ref{item:axiomA2} hold by basic algebraic facts on injections, surjections and sums of modules, while Axiom~\ref{item:axiomP} expresses a count of connected components in a toric arrangement, through inclusion-exclusion.

\subsection{The arithmetic matroid of an elliptic 
 arrangement}
    \label{sub:TheArithmeticMatroidAssociatedToAnArrangement}
We now show that to an elliptic arrangement one can naturally associate an arithmetic matroid.

   \begin{cons} 
       \label{cons:ArithmeticMatroidArrangement}
       Let $\EC$ be an elliptic curve and let $A \in \Mat_{k \times n}(R_\EC)$ encode an elliptic arrangement in $\EC^n$.
       Given a subset $S \subset [k]$ define:
       \begin{align*} 
           \rk_\Arrangement(S) &= \codim \Arrangement_S    \\
           m_\Arrangement(S) &= \card \CoCo{\Arrangement_S}
       \end{align*}
   \end{cons}   
      Our aim is to show that the triple $([k], \rk_\Arrangement, m_\Arrangement)$ of Construction~\ref{cons:ArithmeticMatroidArrangement} is an arithmetic matroid, by proving that axioms~\ref{item:axiomA1},~\ref{item:axiomA2} and~\ref{item:axiomP} hold in this case.

      First, by \Cref{lm:NumberOfLayers} the multiplicity $m(S)$ equals $\tor{\coker A_S}$.
   For convenience, let us write $G_S$ for $\tor \coker A_S$.
   Note that if $X \subset Y$, then the natural projection $\pi \colon \Lambda^Y \to \Lambda^X$ induces a map $\barpi \colon G_Y \to G_X$ with the following properties.

   \begin{lm} 
       \label{lm:ProjectionGS}
       Let
       $S \subseteq [k]$ a set and $i \in [k] \setminus S$ an element. 
       Consider the map $\barpi \colon G_{S \cup i} \to G_S$.
       \begin{enumerate} 
         \item If $\rk(S \cup i) = \rk(S)$, then $\barpi$ is injective. 
         \item If $\rk(S \cup i) \ne \rk(S)$, then $\barpi$ is surjective. 
       \end{enumerate} 
   \end{lm}
  
   \begin{proof} 
           
           Let $e_i$ be the standard basis vector with $1$ in the $i$-th coordinate and zeros everywhere else.
           We observe that there exists a nonzero integer $k$ with $k e_i \in \im A_{S \cup i}$ if and only if $\rk(S \cup i) > \rk(S)$. 
           This is because $\rk(S \cup i) = \rk(S)$ if and only if 
           the $i$-th coordinate is linearly dependent on those indexed by $S$.
\begin{enumerate}
    \item
        let $\barv \in G_{S \cup i}$ be nonzero, and $v \in \Lambda^{S \cup i}$ a representative.
           So $mv \in \im A_{S \cup i}$ for a non-zero $m$.
           If $\barv \in \ker \barpi$, there is $x \in \Lambda^n$ such that $\pi(v) = A_S(x)$.
	   Thus, $v + \lambda e_i \in \im A_{S \cup i}$
           for $\lambda = A_{S \cup i}(x)_i - v_i$;
           also $m \lambda e_i = m(v + \lambda e_i) - mv$ is in $\im A_{S \cup i}$. 
           By the observation $m\lambda = 0$, so $\lambda = 0$ and $\ker \barpi$ is trivial.
    \item Let $\barv \in G_S$, and $v \in \Lambda^{S}$ a representative.
           So $mv \in \im A_{S}$ for a non-zero $m$, 
           thus by a similar argument as before there is $\lambda$ such that 
           $(mv, 0) + \lambda e_i$ is in $\im A_{S \cup i}$.
           Moreover, by the observation let $k \in \ZZ \setminus 0$ such that $k e_i \in \im A_{S \cup i}$.
           Thus, $km(v,0) = k((mv,0)+\lambda e_i) - \lambda (k e_i)$ is in $\im A_{S \cup i}$, 
           so $\overline{(v,0)}$ is a torsion element in $G_{S \cup i}$ and also a lift of $\overline v$,
           proving that $\overline \pi$ is surjective. \qedhere
\end{enumerate}            
   \end{proof}

   \begin{cor} 
       \label{cor:SatisfiesA1}
       The triple $([k], \rk_\Arrangement, m_\Arrangement)$ satisfies Axiom~\ref{item:axiomA1}.
   \end{cor}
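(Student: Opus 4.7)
The plan is to derive the axiom directly from the preceding \Cref{lm:ProjectionGS} together with a one-line application of Lagrange's theorem for finite abelian groups. Recall that by \Cref{lm:NumberOfLayers} we have $m_\Arrangement(S) = \card G_S$ where $G_S = \tor \coker A_S$ is a finite abelian group, and \Cref{lm:ProjectionGS} provides a canonical homomorphism $\barpi \colon G_{S\cup i} \to G_S$ whose behavior is controlled by the rank.

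First, I would split into the two cases of Axiom~\ref{item:axiomA1}. In the case $\rk_\Arrangement(S\cup i) = \rk_\Arrangement(S)$, \Cref{lm:ProjectionGS}(1) gives that $\barpi$ is injective, so $G_{S \cup i}$ embeds into $G_S$ and Lagrange's theorem yields $m_\Arrangement(S \cup i) = \card G_{S\cup i} \mid \card G_S = m_\Arrangement(S)$. In the case $\rk_\Arrangement(S \cup i) \neq \rk_\Arrangement(S)$, \Cref{lm:ProjectionGS}(2) gives that $\barpi$ is surjective; since both groups are finite, the first isomorphism theorem yields $\card G_S \cdot \card \ker \barpi = \card G_{S \cup i}$, so $m_\Arrangement(S) \mid m_\Arrangement(S \cup i)$.

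There is essentially no obstacle: all the work has already been done in establishing the injectivity/surjectivity of $\barpi$ in \Cref{lm:ProjectionGS}, and the corollary is just the numerical shadow of that statement. The only thing worth double-checking is that $G_S$ and $G_{S\cup i}$ are genuinely finite (so that Lagrange applies), but this is immediate from the definition $G_S = \tor \coker A_S$, since the torsion part of a finitely generated module over a noetherian ring such as $R_\EC$ (or even over $\ZZ$, which suffices here because we only need the underlying abelian group to have finite cardinality) is finite as soon as the matrix has full column rank restricted to its image, and the $\ZZ$-computation of \Cref{lm:NumberOfLayers} confirms $\card G_S < \infty$.
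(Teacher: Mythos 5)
Your proof is correct and follows the same route as the paper: apply \Cref{lm:ProjectionGS} to get an injection or surjection $\barpi \colon G_{S\cup i}\to G_S$ according to whether the rank jumps, then read off the divisibility of cardinalities. One minor remark: your caveat about finiteness is overcomplicated --- $\coker A_S$ is a quotient of the finitely generated abelian group $\Lambda^S$, so $\tor\coker A_S$ is automatically finite, with no condition on the matrix needed.
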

   \begin{proof}
        For any $ S \subset [k]$ and $i \in [k] \setminus S $, if $\rk_\Arrangement(S \cup i) = \rk_\Arrangement(S)$ then $G_{S \cup i} \hookrightarrow G_S$ by \Cref{lm:ProjectionGS} and so $m_\Arrangement(S \cup i) \mid m(S)$ by \Cref{lm:NumberOfLayers}.
       Otherwise, $\rk_\Arrangement(S \cup i) = \rk_\Arrangement(S)+1$ and the surjection $G_{S \cup i} \twoheadrightarrow G_S$ implies $m_\Arrangement(S) \mid m_\Arrangement(S \cup i)$.
   \end{proof}
    
Next, if $[X,Y]$ is a molecule with $Y = X \sqcup F \sqcup T$ we can chain the maps from the previous lemma to get a commutative square. 
   \begin{figure}[ht!]
   \center
   \begin{tikzcd}
	   0 \arrow[r] & G_{X \sqcup F \sqcup T} \arrow[d] \arrow[r]
   & G_{X \sqcup F} \arrow[d]  \\
	   0 \arrow[r] & G_{X \sqcup T} \arrow[r] \arrow[d]
		       & G_X \arrow[d] \\
	    & 0 & 0
   \end{tikzcd}
            \renewcommand{\figurename}{Diagram}
            \caption{}
			\label{dia:Projection}
            \renewcommand{\figurename}{Figure}
\end{figure}
   \begin{lm} 
       \label{lm:SatisfiesA2}
       The triple $([k], \rk_\Arrangement, m_\Arrangement)$ satisfies Axiom~\ref{item:axiomA2}.
       That is, if $[X,Y]$ is a molecule with $Y = X \sqcup F \sqcup T$, we have that
   \[ m_\Arrangement(X) \cdot m_\Arrangement(X \sqcup F \sqcup T) = m_\Arrangement(X \sqcup F) \cdot m_\Arrangement(X \sqcup T). \]
   \end{lm}
   
   \begin{proof}
We complete Diagram~\ref{dia:Projection} with cokernels and apply the snake lemma to get Diagram~\ref{dia:MultiplicitySnakeLemma}.
   \begin{figure}[ht!]
   \center
    \begin{tikzpicture}
\matrix[matrix of math nodes,column sep={60pt,between origins},row
sep={40pt,between origins},nodes={asymmetrical rectangle}] (s)
{
|[name=000]| 0 &|[name=ka]| \ker \varphi &|[name=kb]| \ker \psi &|[name=kc]| \ker \bar \psi \\
|[name=00]| 0 &|[name=A]| G_{X \sqcup F \sqcup T} &|[name=B]| G_{X \sqcup F} &|[name=C]|  G_{X \sqcup F }/ G_{X \sqcup F \sqcup T}   &|[name=01]| 0 \\
|[name=02]| 0 &|[name=A']| G_{X \sqcup T} &|[name=B']| G_X &|[name=C']|  G_{X }/ G_{X \sqcup F }   &|[name=03]| 0  \\
              &|[name=ca]| 0 &|[name=cb]| 0   &|[name=cc]| 0 & |[name=06]|  0. \\
};
\draw[->] 
          (000) edge (ka)
          (00) edge (A)
          (02) edge (A')
          (ka) edge (A)
          (kb) edge (B)
          (kc) edge (C)
          (ka) edge (kb)
          (A) edge (B)
          (A') edge node[auto] {\(\)} (B');
\draw[->] 
          (C') edge (03)
          (cc) edge (06)
          (B) edge node[auto] {\(\)} (C)
          (A') edge (ca)
          (B') edge (C')
          (C) edge (01)
          (cb) edge (cc)
          (B') edge (cb)
          (C') edge (cc);
\draw[->]         
          (A) edge node[auto] {\( \varphi \)} (A')
          (B) edge node[auto] {\( \psi\)} (B')
          (C) edge node[auto] {\(\overline{ \psi} \)} (C')
          (kb) edge (kc)
          (kb) edge (kc)
;
\draw[->] 
               (ca) edge (cb)
;
\draw[->,gray,rounded corners] (kc) -| node[auto,text=black,pos=.7]
{\(\partial\)} ($(01.east)+(.5,0)$) |- ($(B)!.35!(B')$) -|
($(02.west)+(-.5,0)$) |- (ca);
\end{tikzpicture}
            \renewcommand{\figurename}{Diagram}
            \caption{}
			\label{dia:MultiplicitySnakeLemma}
            \renewcommand{\figurename}{Figure}
\end{figure}
The result follows from the third column if we prove that $\ker \overline \psi$ is trivial, that is $\ker \varphi$ and $\ker \psi$ are isomorphic.
So we show that $\ker \varphi \to \ker \psi$ is surjective.
If $\bary$ is in $\ker \psi$, there is a representative in $\Lambda^{X \sqcup F}$ of the form $(0, v)$, where the zeros are for the coordinates indexed by $X$. 
There is a non-zero $m$ such that $(0, mv) = A_{X \sqcup F}(x)$ for some $x \in \Lambda^n$.
Since $\rk(X \sqcup T) = \rk(X)$, the coordinates indexed by $T$ are dependent on those indexed by $X$.
Thus, $(0, mv) = A_{X \sqcup F}(x)$ implies $(0, mv, 0) = A_{X \sqcup F \sqcup T}(x)$.
Hence $\overline{(0,v,0)}$ is a torsion element in $G_{X \sqcup F \sqcup T}$ and also the desired lift for $\bary$.
\end{proof}

The remaining Axiom~\ref{item:axiomP} is proved in the next subsection by using duality.

\subsection{Arithmetic matroid duality}
    \label{sub:DualOfAnArithmeticMatroid}
Given a triple $M = (E, \rk, m)$ we define 
the \emph{dual rank function} $\rk^*(S)$ and the \emph{dual multiplicity} $m^*(S)$ as
\begin{align} 
    \label{eq:DualArithmeticMatroid}
\rk^*(S) = \card S - (\rk E - \rk(E \setminus S))  \quad \quad m^*(S) = m(E \setminus S).
\end{align}
Note that $(M^*)^*$ equals $M$ again. 
By \cite[Lemma 2.2]{dm13}, if $M$ is an arithmetic matroid, then 
so is $M^* = (E, \rk^*, m^*)$ and we call it the \emph{dual arithmetic matroid}.
By \cite[Section~2]{bm14} Axiom~\ref{item:axiomP} is equivalent to Axiom~\ref{item:axiomA2} plus:
    \begin{enumerate}[label=(P\arabic*)]
        \item \label{item:axiomP1} For all $X \subset E$, if $Y \in [X, E]$ and $\rk X = \rk Y$, 
	     then $\rho(X,Y) \ge 0$.
         \item \label{item:axiomP2} For all $X \subset E$,  if $Y \in [X, E]$ and $\rk^* X = \rk^* Y$, 
	     then $\rho^*(X,Y) \ge 0$. 
\end{enumerate}
Here $\rho^*$ is the analogous expression for the dual matroid.
Looking at Axiom~\ref{item:axiomP1} in this setting, we must prove for an elliptic arrangement $\calA$ that
\[ 
        \rho(X,Y) = \sum_{S \in [X,Y]} (-1)^{|S|-|X|}m(S)
\]
is non-negative. 
We abuse notation by writing $\CoCo S$ instead of $\CoCo{\Arrangement_S}$, 
i.e.~the set of connected components of $\Arrangement_S = \bigcap_{i \in S} H_i$.
\begin{lm}
    \label{lm:CountingComponents}
Let $\Arrangement$ be an elliptic arrangement, for all $X \subset Y \subseteq [k]$ with $\rk X = \rk Y$, we have
\[ 
  \rho(X,Y) = \card \Big(\CoCo X \setminus \bigcup_{i \in Y \setminus X} \CoCo{X \cup i} \Big).
\]
\end{lm}

\begin{proof} 
    The expression on the right considers all the layers $\ell$ such that $\ell$ is in $\Arrangement_X$ and for any $i \in Y \setminus X$ we have that $\ell$ is not a subset of $H_i = \ker A_i$.
    Note that since $\rk X = \rk Y$,
    taking connected components $\CoCo -$ is an inclusion-reversing operation on the interval $[X,Y]$,
    i.e.~for all $S, T \in [X,Y]$ we have that $S \subset T$ implies that $\CoCo T \supset \CoCo S$. 
    For similar reason we have $\CoCo S \cap \CoCo T = \CoCo{S \cup T}$ for all $S, T \in [X,Y]$. 
    Recall that $m(S) = \card \CoCo S$.
    The result follows from a straightforward inclusion-exclusion count.
\end{proof}
   \begin{cor} 
       \label{cor:SatisfiesP1}
       In the context of Construction~\ref{cons:ArithmeticMatroidArrangement}, the triple $([k], \rk_\Arrangement, m_\Arrangement)$ satisfies Axiom~\ref{item:axiomP1}.
    \end{cor}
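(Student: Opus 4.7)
The plan is to read Corollary~\ref{cor:SatisfiesP1} as an immediate consequence of Lemma~\ref{lm:CountingComponents}. Axiom~\ref{item:axiomP1} requires that $\rho(X,Y) \ge 0$ for every pair $X \subseteq Y \subseteq [k]$ with $\rk_\Arrangement X = \rk_\Arrangement Y$, where
\[ \rho(X,Y) = \sum_{S \in [X,Y]} (-1)^{|S|-|X|} m_\Arrangement(S). \]
This is precisely the situation covered by the preceding lemma, so I would simply invoke it to rewrite
\[ \rho(X,Y) = \card\Big(\CoCo X \setminus \bigcup_{i \in Y \setminus X} \CoCo{X \cup i}\Big). \]
The right-hand side is the cardinality of a set, hence a non-negative integer, and the conclusion follows at once.

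Geometrically, the subset being counted consists of those layers of $\Arrangement_X$ that fail to be contained in any of the additional divisors $H_i$ for $i \in Y \setminus X$; this is automatically a subset of $\CoCo X$, so its cardinality is manifestly $\ge 0$. Thus the content of Axiom~\ref{item:axiomP1}, once the combinatorial rewrite provided by Lemma~\ref{lm:CountingComponents} is in hand, reduces to the triviality that cardinalities are non-negative.

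The main obstacle in this line of argument is therefore not the corollary itself but the preceding lemma, whose proof rests on the inclusion-exclusion identity made possible by the two geometric facts that $S \mapsto \CoCo S$ is inclusion-reversing on $[X,Y]$ and that $\CoCo S \cap \CoCo T = \CoCo{S \cup T}$ — both of which require the hypothesis $\rk_\Arrangement X = \rk_\Arrangement Y$. Granted that input, the proof of Corollary~\ref{cor:SatisfiesP1} is essentially a single line.
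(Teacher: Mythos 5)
Your proposal is correct and coincides with the paper's argument: both invoke Lemma~\ref{lm:CountingComponents} to rewrite $\rho(X,Y)$ as the cardinality of a set, from which non-negativity is immediate.
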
 

    \begin{proof} 
    From \Cref{lm:CountingComponents} follows that $\rho(X,Y)$ is non-negative, as it counts the cardinality of a set.
\end{proof}
    
Finally, Axiom~\ref{item:axiomP2} would follow from duality if were able to build an elliptic arrangement realizing the dual arithmetic matroid.
We believe this construction to be possible, by developing an analogue of the \textit{generalized toric arrangements} developed
in \cite[Section~4]{dm13},
but we deem it excessively technical for our aims.
Thus, in the next section we approach Axiom~\ref{item:axiomP2} with a weaker construction. 

\subsection{The dual matroid as a minor}
    \label{sub:RealizableDual}
We describe an elliptic arrangement that gives an arithmetic matroid $M_{\Omega \cup \Psi}$
with a minor isomorphic to the dual of~$M$.
Thus, $M_{\Omega \cup \Psi}$ satisfies Axiom~\ref{item:axiomP1}.
Since Axiom~\ref{item:axiomP1} is inherited to minors,
$M^*$ satisfies Axiom~\ref{item:axiomP1}, 
and $(M^*)^* = M$ satisfies Axiom~\ref{item:axiomP2} as desired.

The main ingredient is the elliptic analogue of a toric construction from \cite[Section~3.4]{dm13}.
Firstly, let us recall that $k$ elements $\calP = \aset{p_1, \dots, p_k} \subset \ZZ^n$
give an arithmetic matroid $M_\calP$ as follows: 
for $S \subset \calP$ let $G_S = \rad_{\ZZ^n} \langle p \suchthat p \in S \rangle$.
Consider then $\rk_\calP(S) = \rk \langle p \suchthat p \in S \rangle$ and 
\[m_\calP(S) = \# \tor (\ZZ^n/\langle p \suchthat p \in S \rangle) = \# G_S/ \langle p \suchthat p \in S \rangle.\]
The triple $M_\calP = ([k], \rk_\calP, m_\calP)$ is an arithmetic matroid; see \cite[Section~2.4]{dm13}.

Secondly, we recall the contraction of arithmetic matroids $M = (E, \rk, m)$: the contraction $M/T$ by a set $T \subset E$ is an arithmetic matroid on $E \setminus T$ with rank $r_{M/T}(A) = r(A \cup T) - r(T)$ and multiplicity $m_{M/T}(A) = m(A \cup T)$.

Finally, let $\calQ = \aset{q_1, \dots, q_n} \subset \ZZ^k$ be the columns of the $(k \times n)$-matrix $A$ whose $i$-th row is the vector $p_i$ of $\calP$.
Also let $e_i$ be the $i$-th standard vector of $\ZZ^k$ and $\calB = \aset{e_1, \dots, e_k} \subset \ZZ^k$ the collection of standard vectors.
We consider the matroid $M_{\calB \cup \calQ}$ associated to the matrix $\calB \cup \calQ$ with $n + k$ columns: 

\begin{lm}[Theorem~3.8 from \cite{dm13}] 
    \label{lm:DualMatroidToric}
    Let $\calP$ be a list of elements in $\ZZ^n$, the dual $(M_\calP)^*$ is isomorphic to the contraction $M_{\calB \cup \calQ} / \calQ$.
\end{lm}

    We perform a similar construction for elliptic arrangements. Let $\Arrangement$ be an elliptic arrangement defined by a matrix $A \colon \EC^n \to \EC^k$ and consider the dual homomorphism of abelian varieties $A^H \colon (\EC^\vee)^k \to (\EC^\vee)^n$, where $\EC^\vee$ is the dual elliptic curve and $A^H$ is the conjugate transpose of $A$.
    Consider $\mathcal{B}$ the arrangement given by the matrix
    \[ (\EC^\vee)^k \xrightarrow[\begin{pmatrix}
        I_k\\ A^H
    \end{pmatrix}]{} (\EC^\vee)^{k+n}\]
    Let $T= \{ k+1, \dots, n\}$ be the indexes of the rows of $A^H$.
    \begin{lm}
    \label{lm:DualMatroidElliptic}
        Let $\Arrangement$ be an elliptic arrangement, the dual matroid $M_\Arrangement^*$ is isomorphic to the contraction $M_\mathcal{B}/T$ 
    \end{lm}
    Before the proof of \Cref{lm:DualMatroidElliptic} we need a technical result. 
    \begin{lm}
    \label{lm:coker_conjugate_transpose}
        For any matrix $A \in \Mat_{k,n}(R)$ we have $\tor \coker (A \colon \Lambda^n \to \Lambda^k) \simeq \tor \coker (A^H\colon (\Lambda^\vee)^k \to (\Lambda^\vee)^n)$ as abelian groups.
    \end{lm}
    \begin{proof}
        Observe that $\tor \coker A \simeq \tor \coker A^T$ because their elementary divisors coincides.
        It remains to show that $\tor \coker A \simeq \tor \coker \overline{A}$ where $\overline{A}$ is the complex conjugate of the matrix $A$.
        As in \Cref{sub:MultiplicationByAlpha}, consider $\ZZ$-bases of $\Lambda^n$ and $\Lambda^k$ given by $\{e_i, \tau e_i\}_i$ for $i\leq n$, respectively $i\leq k$.
        The matrices that represent $A$ and $\overline{A}$ in the $\ZZ$-basis are of the form
        \[ A_\ZZ = \begin{pmatrix}
            X & -Y\normDet' \\
            YN & X +Y \normTr c'
        \end{pmatrix}
        \text{ and }
        \overline{A}_\ZZ = \begin{pmatrix}
            X +Y \normTr c' & Y\normDet' \\
            -YN & X
        \end{pmatrix}
        \]
        We show that they have the same Smith normal form, indeed by exchanging rows and columns 
        \[ A_\ZZ \sim \begin{pmatrix}
            X +Y \normTr c' & YN \\
            -Y\normDet' & X
            \end{pmatrix} \sim  
            \begin{pmatrix}
            X +Y \normTr c' & Y \\
            -Y\normDet'N & X
            \end{pmatrix}
            \sim  
            \begin{pmatrix}
            X +Y \normTr c' & -Y\normDet' \\
            YN & X
            \end{pmatrix}
            \sim \overline{A}_\ZZ,
        \]
        where the two middle steps follows from the proof of \Cref{lm:CokernelsCoincide}.
    \end{proof}

    \begin{proof}[Proof of \Cref{lm:DualMatroidElliptic}]
        The statement about underlying matroid is classical, we check only that the two multiplicity functions coincide.
        Let $S \subseteq [k]$, we want to compute 
        \[m_{\mathcal{B}/T}(S) = m_\mathcal{B}(S \cup T) = \# \tor \coker \begin{pmatrix}
        (I_k)_S\\ A^H
    \end{pmatrix}\]
    Using row operation we simplify the matrix  and obtain
    \[ \# \tor \coker \begin{pmatrix}
        (I_k)_S\\ A^H
    \end{pmatrix} =  \# \tor \coker \begin{pmatrix}
        I_s & 0\\ 0 & (A_{[k] \setminus S})^H
    \end{pmatrix}
    =  \# \tor \coker \begin{pmatrix}
         (A_{[k] \setminus S})^H
    \end{pmatrix}\]
    where for simplicity in the middle step we have assumed that $S$ are the first $s$ indices.
    Finally, \Cref{lm:coker_conjugate_transpose} implies $m_{\mathcal{B}/T}(S)= m_\Arrangement ([k] \setminus S)$ and this complete the proof.
    \end{proof}
    
   \begin{cor} 
       \label{cor:SatisfiesP2}
       In the context of Construction~\ref{cons:ArithmeticMatroidArrangement}, the triple $([k], \rk_\Arrangement, m_\Arrangement)$ satisfies Axiom~\ref{item:axiomP2}.
    \end{cor}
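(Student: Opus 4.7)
The plan is to reduce Axiom~\ref{item:axiomP2} for $M_\Arrangement$ to Axiom~\ref{item:axiomP1} for another arithmetic matroid coming from an elliptic arrangement, exploiting the realization of the dual as a minor provided by \Cref{lm:DualMatroidElliptic}. Concretely, since $(M^*)^* = M$, the validity of Axiom~\ref{item:axiomP2} for $M_\Arrangement$ is equivalent to the validity of Axiom~\ref{item:axiomP1} for $M_\Arrangement^*$. So it suffices to show that $M_\Arrangement^*$ satisfies~\ref{item:axiomP1}.

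First I would apply \Cref{lm:DualMatroidElliptic} to produce the auxiliary elliptic arrangement $\mathcal{B}$ on $(\EC^\vee)^{k+n}$, so that $M_\Arrangement^*$ is isomorphic to the contraction $M_\mathcal{B}/T$. By \Cref{cor:SatisfiesP1} applied to the elliptic arrangement $\mathcal{B}$, the arithmetic matroid $M_\mathcal{B}$ itself satisfies Axiom~\ref{item:axiomP1}. Thus the problem is reduced to showing that \ref{item:axiomP1} passes to contractions of arithmetic matroids.

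The remaining step is a purely formal verification: if $[X,Y]$ is a molecule in $M_\mathcal{B}/T$ with decomposition $Y = X \sqcup F \sqcup T'$ and $\rk_{M_\mathcal{B}/T}(X) = \rk_{M_\mathcal{B}/T}(Y)$, then $[X \cup T, Y \cup T]$ is a molecule in $M_\mathcal{B}$ with decomposition $Y\cup T = (X\cup T) \sqcup F \sqcup T'$ and equal ranks, using the definitions $\rk_{M/T}(A) = \rk(A\cup T) - \rk(T)$ and $m_{M/T}(A) = m(A\cup T)$. A direct substitution shows
\[
\rho_{M_\mathcal{B}/T}(X,Y) = \sum_{S\in[X,Y]} (-1)^{|S|-|X|}\, m_\mathcal{B}(S \cup T) = \rho_{M_\mathcal{B}}(X\cup T, Y\cup T),
\]
so non-negativity of the right-hand side, given by \ref{item:axiomP1} for $M_\mathcal{B}$, yields non-negativity of the left-hand side. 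Hence $M_\mathcal{B}/T \simeq M_\Arrangement^*$ satisfies \ref{item:axiomP1}, which is exactly \ref{item:axiomP2} for $M_\Arrangement$.

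The main obstacle I expect is ensuring that the molecule structure on $[X,Y]$ in $M_\mathcal{B}/T$ lifts cleanly to a molecule on $[X\cup T, Y\cup T]$ in $M_\mathcal{B}$, i.e.\ checking that the rank condition defining a molecule behaves compatibly with the shift by $T$. Once this combinatorial bookkeeping is done, everything else follows from \Cref{lm:DualMatroidElliptic}, \Cref{lm:coker_conjugate_transpose}, and \Cref{cor:SatisfiesP1}.
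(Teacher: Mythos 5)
Your proposal is correct and follows essentially the same route as the paper: duality reduces Axiom~\ref{item:axiomP2} for $M_\Arrangement$ to Axiom~\ref{item:axiomP1} for $M_\Arrangement^*\simeq M_\mathcal{B}/T$, and the shift of molecules $[X,Y]\mapsto[X\cup T, Y\cup T]$ transports \ref{item:axiomP1} from $M_\mathcal{B}$ to its contraction. (Incidentally, the paper's own proof cites \Cref{cor:SatisfiesA1} for $M_\mathcal{B}$ satisfying~\ref{item:axiomP1}, which appears to be a typographical slip for \Cref{cor:SatisfiesP1}; your citation is the correct one.)
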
 

    \begin{proof} 
        By \Cref{cor:SatisfiesA1} the triple $M_{\mathcal{B}} = ([k+n], \rk_\mathcal{B}, m_\mathcal{B})$ satisfies Axiom~\ref{item:axiomP1}.
        This implies that the minor $M_{\mathcal{B}} / T$ satisfies Axiom~\ref{item:axiomP1}.
        To prove the relevant equality, 
        given a molecule $[X, Y]$ in $M_{\mathcal{B}} / T$
        just consider the molecule $[X \cup T, Y \cup T]$ in $M_{\mathcal{B}}$. 
        Now by \Cref{lm:DualMatroidElliptic}, 
        Axiom~\ref{item:axiomP1} holds for $M_\Arrangement^*$, 
        which as observed before proves Axiom~\ref{item:axiomP2} for $M_\Arrangement$ by duality.
\end{proof}
    
\begin{thm}
    \label{thm:4EllipticArr-AritMat}
    The data of ground set $[k]$, of rank function $\rk_\Arrangement$, 
    and of number of connected components 
    $m_\Arrangement(S) = \card \tor{\coker A_S }$ gives rise to an arithmetic matroid.
\end{thm}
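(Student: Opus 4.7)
My plan is to verify in turn that $\rk_\Arrangement$ is a matroid rank function and then that the pair $(\rk_\Arrangement, m_\Arrangement)$ satisfies the arithmetic matroid axioms \ref{item:axiomA1}, \ref{item:axiomA2}, and \ref{item:axiomP}. The matroid axioms (r1)--(r3) for $\rk_\Arrangement(S) = \codim_\CC \Arrangement_S = \dim_\CC \im(\res S A_\CC)$ are free: this rank function is that of the realizable matroid associated to the columns of $A_\CC$, so only the multiplicity axioms need work. The governing object throughout will be $G_S := \tor \coker \res S A_\Lambda$, which equals $m_\Arrangement(S)$ in cardinality by \Cref{lm:NumberOfLayers}, and the natural coordinate projections $\bar\pi \colon G_Y \to G_X$ for $X \subseteq Y$.

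For \ref{item:axiomA1}, I would show $\bar\pi \colon G_{S \cup i} \to G_S$ is injective when $\rk(S \cup i) = \rk(S)$ and surjective when $\rk(S \cup i) = \rk(S) + 1$. The dichotomy reduces to the observation that $\rk(S \cup i) > \rk(S)$ if and only if some nonzero multiple of $e_i$ lies in $\im \res{S \cup i} A$: in the dependent case any lift of a class in $\ker \bar\pi$ can be corrected by a multiple of $e_i$ which must then vanish, while in the independent case lifts of torsion elements from $G_S$ can be produced by adding such a multiple of $e_i$. The divisibility statement in \ref{item:axiomA1} is then immediate.

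For \ref{item:axiomA2} on a molecule $[X,Y]$ with $Y = X \sqcup F \sqcup T$, I would assemble the four projections between $G_Y$, $G_{X \sqcup F}$, $G_{X \sqcup T}$, $G_X$ into a commutative square and apply the snake lemma. The coloop status of $F$ (rank grows) gives the two horizontal maps as surjections, while the loop status of $T$ (rank constant) gives the two vertical maps as injections. The only point that is not formal is that the induced map between the horizontal cokernels is an isomorphism; for this I would lift a torsion element $(0, v) \in \Lambda^{X \sqcup F}$ to $(0, v, 0) \in \Lambda^{X \sqcup F \sqcup T}$, using the dependence of the $T$-coordinates on the $X$-coordinates to stay inside the image of $A_{X \sqcup F \sqcup T}$. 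The multiplicativity of $m_\Arrangement$ on the molecule then drops out of the resulting short exact sequence.

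The real obstacle is \ref{item:axiomP}, which I would split into \ref{item:axiomP1} and \ref{item:axiomP2} following \cite{bm14}. For \ref{item:axiomP1} on an interval $[X,Y]$ with $\rk X = \rk Y$, I would note that taking layers $\CoCo -$ is inclusion-reversing on $[X,Y]$ with $\CoCo S \cap \CoCo T = \CoCo{S \cup T}$, so an ordinary inclusion-exclusion identifies $\rho(X,Y)$ with the number of layers of $\Arrangement_X$ not swallowed by any $H_i$, $i \in Y \setminus X$; this is manifestly non-negative. \ref{item:axiomP2} is genuinely harder because the dual arithmetic matroid need not be realized by an elliptic arrangement on the same $\EC$. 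To circumvent this I would not try to realize $M_\Arrangement^*$ directly; instead I would introduce an auxiliary arrangement $\mathcal B$ on $(\EC^\vee)^{k+n}$ defined by the matrix $\begin{pmatrix} I_k \\ A^H \end{pmatrix}$ and identify $M_\Arrangement^*$ with the contraction $M_\mathcal{B}/T$ where $T$ indexes the rows of $A^H$. The key technical input is the identification $\tor \coker A \cong \tor \coker A^H$ of \Cref{lm:coker_conjugate_transpose}, which translates a column deletion on the $\mathcal B$-side into the correct dual multiplicity on the $\Arrangement$-side. Since \ref{item:axiomP1} holds for $M_\mathcal{B}$ by the previous step and is inherited by minors, \ref{item:axiomP2} for $M_\Arrangement$ follows by self-duality. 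Collecting \Cref{cor:SatisfiesA1}, \Cref{lm:SatisfiesA2}, \Cref{cor:SatisfiesP1} and \Cref{cor:SatisfiesP2} then yields the theorem.
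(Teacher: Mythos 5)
Your proposal is correct and follows essentially the same route as the paper: A1 via the injective/surjective dichotomy of \Cref{lm:ProjectionGS}, A2 via the snake lemma on the square of projection maps together with the lift $(0,v)\mapsto(0,v,0)$, P1 via the inclusion--exclusion count of \Cref{lm:CountingComponents}, and P2 via the auxiliary arrangement $\mathcal B$ with matrix $\begin{pmatrix} I_k \\ A^H\end{pmatrix}$, the contraction $M_{\mathcal B}/T$, and \Cref{lm:coker_conjugate_transpose}. The only blemishes are cosmetic: the rank function realizes the matroid of the \emph{rows} of $A_\CC$ (not columns), and in the A2 snake-lemma step the map you need to check is an isomorphism is between the kernels of the horizontal surjections (equivalently, the cokernels of the vertical injections), not between the horizontal cokernels, which are already zero.
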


\begin{proof}
       Note that $ \codim \Arrangement_S = \rk A_S$, which implies that $([k], \rk_\Arrangement)$ is an underlying matroid. 
       We are left with: \Cref{cor:SatisfiesA1} proving Axiom~\ref{item:axiomA1}; \Cref{lm:SatisfiesA2} proving Axiom~\ref{item:axiomA2};
       Corollaries~\ref{cor:SatisfiesP1}
       and~\ref{cor:SatisfiesP2} proving Axiom~\ref{item:axiomP}.
\end{proof}

\begin{remark}
    Another proof of \Cref{thm:4EllipticArr-AritMat} is suggested by Emanuele Delucchi.
    It consists in checking that the action of $\Lambda^n$ on the periodic hyperplane arrangement $\Arrangement^\upharpoonright$ in $\CC^n$ by translations is \textit{arithmetic}, in the sense of \cite[Definition 3.15]{dr18}.
    Then \Cref{thm:4EllipticArr-AritMat} follows from \cite[Theorem C]{dr18}.
\end{remark}

\subsection{Examples}
We provide two interesting examples:
the first is an arithmetic matroid realizable via elliptic arrangements, but not via toric arrangements.
The second is a variation in which we change the elliptic curve, but the defining matrix is the same. This second arrangement defines an arithmetic matroid realizable as toric arrangement.

   Recall from \cite[Section~3]{dm13} that all arithmetic matroids realizable in the usual sense (that is, via toric arrangements) have the so-called \textit{GCD property}, i.e.~the multiplicity function is defined by the values on the independent sets:
   \[
     m(A) = \gcd(\aset{m(I) \suchthat I \subseteq A \text{ and } |I| = \rk(I) = \rk(A) }).
   \]
   The other implication and the realizability space were studied in \cite{PagariaPaolini}.
   We now show a generalized elliptic arrangement whose arithmetic matroid do not satisfy the GCD property.

\begin{ex} 
    \label{ex:NewRealization}
   Let $\Lambda=\ZZ [\sqrt{-3]}$, $\EC = \CC / \Lambda$, so $\End \EC \isom  \ZZ[\sqrt{-3}]$ as in \Cref{sub:ModulesOverR}. 
   Consider the arrangement $\Arrangement$ associated with the matrix
   \[A= \begin{pmatrix}
       2 \\
       1+\sqrt{-3}
   \end{pmatrix}.\]
   We get:
     \begin{align*}
         m_\Arrangement(1,2) &= \card \ZZ[\sqrt{-3}] / (2, 1 + \sqrt{-3}) = 2, \\
         m_\Arrangement(1) = \card \ZZ[\sqrt {-3}] / 2R = 4, \quad\quad & \quad \quad 
      m_\Arrangement(2) = \card \ZZ[\sqrt{-3}]/(1+\sqrt {-3}) = 4, \\ 
         m_\Arrangement(\varnothing) &= 1. 
    \end{align*}
    This arithmetic matroid does not satisfy the GCD property, since $m(1,2) \ne 4$.

    On the other hand, if we consider $\Gamma=\ZZ[\omega]$ where $\omega$ is a third root of unity, the arrangement $\mathcal{B}$ in $\CC / \Gamma$ defined by the matrix
    \[
    B=\begin{pmatrix}
       2 \\
       1+\sqrt{-3}
   \end{pmatrix}.
    \]
    has multiplicities $m_\mathcal{B}(\varnothing) = 1$ and $m_\mathcal{B}(1) = m_\mathcal{B}(2) = m_\mathcal{B}(1,2) = 4$, 
    which is realizable via a toric arrangement associated to the matrix $B' = \begin{pmatrix}
        4 \\
        4
    \end{pmatrix}$. 
\end{ex}

\subsection{The GCD property}
    \label{sub:TheGCDProperty}
    The proof of the GCD property for toric arrangements
    uses the fact that over a PID every matrix has a Smith normal form.
    This is no longer necessarily true for matrices over $R$.
    Recall that if $R$ is Dedekind, all localizations $R_\frakp$ over all maximal ideals $\frakp \subset R$ are discrete valuation rings, therefore PID. 
    This fact, together with a local-global principle argument, allows us to prove:
\begin{lm} 
    \label{lm:GCDpropertyForRDedekind}
    Let $\Arrangement$ be an elliptic arrangement in $\EC^n$.
    If $R=\End \EC$ is Dedekind, then the arithmetic matroid $M_\Arrangement$ satisfies the GCD property.
\end{lm}
\begin{proof} 
    Let $\frakp \subset R$ be a maximal prime.
    As usual, denote by $R_\frakp = \inv{(R \setminus \frakp)} R $ the localization of $R$ and $M_\frakp$ the $R_\frakp$-module obtained by localizing $M$.
    We use the following facts about the localization of $R$ over a maximal prime ideal $\frakp \subset R$:
    \begin{itemize} 
        \item $\tor M_\frakp = (\tor M)_\frakp$, see e.g.\ \cite[Proposition 3.3]{fm16};
        \item $\coker A_\frakp = (\coker A)_\frakp$, because localization is an exact functor.
        \item If $M$ is a finitely generated torsion module over $R$, then
            $M \simeq \bigoplus_{\frakp} M_\frakp$. 
            The isomorphism follows by the structure theorem for modules over Dedekind domain \cite[Chapter 16, Theorem 22]{DummitFoote} and by the isomorphism $R/\frakp^e \simeq R_\frakp/\frakp^e R_\frakp $.
    \end{itemize}
    Applying these facts we get
    \begin{align}
        \label{eq:GCDpropertyinlocalizations} 
        m_\Arrangement (S) &= \card ( \tor \coker A_S )  \\
        \nonumber        &= \card \left(  \bigoplus _\frakp  \left( \tor \coker A_S \right)_\frakp \right) = \prod_\frakp \card\tor \coker\left( A_S \right)_\frakp 
    \end{align}
    Recall that $R$ is Dedekind if and only if $R$ is Noetherian and all localizations $R_\frakp$ at maximal primes are discrete valuation rings.
    Thus, $m_\frakp(S) = \card \tor \coker\left( A_S \right)_\frakp$
    satisfies the GCD property.
    Since Equation~\eqref{eq:GCDpropertyinlocalizations} expresses $m_\Arrangement$ as the product of $m_\frakp$ over all maximal primes $\frakp \subset R$, we have that $m_\Arrangement$ satisfies the GCD property. 
\end{proof}

\subsection{Matroids over rings}
\label{subsec:matroids_over_rings}
Elliptic arrangements with complex multiplication define naturally matroids over a ring, see \cite{fm16, fm19a} for the definition.

Let $\Arrangement$ be the arrangement defined by a matrix $A \in \Mat_{k,n}(R)$.
Consider the $R$-module $(\Lambda^\vee)^n$ and the submodules 
\[ \im \left(A_S^H \colon (\Lambda^\vee)^S \to (\Lambda^\vee)^n \right) \]
for each $S\subseteq [k]$. This data defines a \textit{realizable polymatroid over} $R$ with the desirable properties
\begin{align*}
    &\codim \Arrangement_S = \rk_R (\coker(A_S^H)), \\
    & \# CC(\Arrangement_S) = \# \tor (\coker(A_S^H)).
\end{align*}
These properties follow from Lemmas~\ref{lm:NumberOfLayers} and~\ref{lm:coker_conjugate_transpose}.
This is not a matroid over $R$ because $\Lambda$ could be a non-free $R$-module: take $k=n=1$ and $A= \begin{pmatrix}
    1
\end{pmatrix}$, the surjection $\Lambda^\vee \to 0$ does not have a $R$-cyclic kernel. 

The construction presented above is the most natural one; however  -- if one is more comfortable working with matroids instead polymatroids -- an alternative construction is available.
Consider the module $R^n$ and the elements $A_R^H e_i$ for each $i \in [k]$, where $e_i$ is the standard base element of $R^k$.
This data defines a \textit{realizable matroid over }$R$ with the properties
\begin{align*}
    &\codim \Arrangement_S = \rk_R (R^n/\langle A_R^H e_i \mid i \in S\rangle ), \\
    & \# CC(\Arrangement_S) = \# \tor (R^n/\langle A_R^H e_i \mid i \in S\rangle ).
\end{align*}
The equalities follow from Lemmas~\ref{lm:NumberOfLayers},~\ref{lm:coker_conjugate_transpose} and~\ref{lm:CokernelsCoincide}.

\subsection{Calculating the Euler characteristic}
    \label{sub:CalculatingTheEulerCharacteristic}
    The work of \cite{bib15} computes the Euler characteristic of the complement $\CM(\Arrangement)$ for abelian arrangements $\Arrangement$ that are defined by a $(k \times n)$-matrix $A$ with integer coefficients.
    That work can be adapted to our setting where $A$ has coefficients in~$R$.
    We do not do a full recap here, but rather indicate the main signposts and the points at which one needs a different argument to prove the result.

    Let $\calX = \EC^n$ and $j \colon \CM(\Arrangement) \hookrightarrow \calX$ the natural embedding.
    We consider the constant sheaf $\underline{\QQ}$ and its higher direct image $R^{\bullet} j_* \underline{\QQ}$, which we aim to relate via the Leray spectral sequence.
    At the second page of such a spectral sequence, we get:
    \[
      E^{p,q}_2 = H^p(X, R^q j_* \underline{\QQ}) \Longrightarrow H^{p,q}(\CM(\Arrangement)).
    \]
    Since elliptic arrangement are locally isomorphic to hyperplane arrangements, for any $x \in \calX$ we have
    \begin{align*} 
        (R^q j_* \underline{\QQ})_x &= \lim_{x \in U} H^q(\inv j(U), \underline{\QQ}) 
        &\simeq \bigoplus_{\substack{W \in \calL_q(\Arrangement)\\ x \in W}} H^{\text{top}}(M(A^{\upharpoonright}_W)) 
    \end{align*}
    Thus $$R^q j_* \underline{\QQ} = \bigoplus_{W \in \calL(\Arrangement)} H^{\text{top}}(M(A^{\upharpoonright}_W)) \otimes \underline{\QQ}_W.$$
    In particular,
    \begin{equation}
    \label{eq:Leray_SS}
                E^{p,q}_2 = H^p(R^q j_* \underline{\QQ}) = \bigoplus_{W \in \calL_q(\Arrangement)} H^{\text{top}}(M(A^\upharpoonright _W)) \otimes H^p(W, \underline{\QQ}).
    \end{equation}
    Using the mixed Hodge structure, we get that the spectral sequence degenerates at the third page $E_3$. Hence $H^\bullet(\CM(\Arrangement), \QQ) = H^\bullet(E^{\bullet, \bullet}_2, d_2)$, both as algebras and as mixed Hodge structures.

    Recall that the arithmetic Tutte polynomial of the arithmetic matroid is 
    \[T_{M}(x,y)= \sum_{S \subseteq [n]} m(A)(x-1)^{\rk[n] -\rk S} (y-1)^{\lvert S \rvert - \rk S}\]
    and the characteristic polynomial of the lattice is
    \[\chi_\calL (t) = \sum_{W \in \calL} \mu_{\calL}(\hat{0}, W) t^{n-\rk W}.\]
    We get the following result.

    \begin{thm}
 
        \label{pr:EulerCharacteristic}
    Let $\Arrangement$ be an elliptic arrangement of rank $n$. The Euler characteristic of the complement is 
     \[
e_{\CM(\Arrangement)} = (-1)^n T_{M_\Arrangement}\left( 1,0 \right) = \chi_{\calL(\Arrangement)} \left(0 \right) = (-1)^n \sum_{P \in \calL_n(\Arrangement)} \operatorname{nbc} (P).
     \]
where $\operatorname{nbc} (P)$ is the number of no broken circuits contained in the flat $P$.
\end{thm}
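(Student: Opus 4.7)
The plan is to extract $e_{\CM(\Arrangement)}$ from the Leray spectral sequence~\eqref{eq:Leray_SS}. Since the Euler characteristic is invariant under passing to the target of a convergent spectral sequence, one has
\[
e_{\CM(\Arrangement)} = \sum_{p,q} (-1)^{p+q} \dim_\QQ E_2^{p,q} = \sum_{W \in \calL(\Arrangement)} (-1)^{\operatorname{codim} W} \dim_\QQ H^{\mathrm{top}}(M(A^\upharpoonright_W)) \cdot e(W),
\]
after interchanging the sums over $W$ and $p$ and collecting the alternating sum over $p$ into $e(W)$.

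Next, I would invoke \Cref{lm:descr_conn_comp} to conclude that every layer $W$ is a product of elliptic curves, hence a compact complex torus; in particular $e(W) = 0$ whenever $\dim_\CC W > 0$, and only zero-dimensional layers $P \in \calL_n(\Arrangement)$ survive the sum. This yields
\[
e_{\CM(\Arrangement)} = (-1)^n \sum_{P \in \calL_n(\Arrangement)} \dim_\QQ H^{\mathrm{top}}(M(A^\upharpoonright_P)),
\]
and Orlik-Solomon applied to the central, essential rank-$n$ hyperplane arrangement $A^\upharpoonright_P$ identifies this top cohomology dimension with $\operatorname{nbc}(P)$, proving the rightmost equality.

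For the middle equality, at $t = 0$ only flats with $\rk W = n$ contribute to $\chi_{\calL(\Arrangement)}(t) = \sum_W \mu_\calL(\hat 0, W)\, t^{n - \rk W}$, and the classical NBC theorem yields $(-1)^n \mu_\calL(\hat 0, P) = \operatorname{nbc}(P)$. The leftmost equality $(-1)^n T_{M_\Arrangement}(1, 0) = \chi_{\calL(\Arrangement)}(0)$ then follows from the general identity $\chi_{\calL(\Arrangement)}(t) = (-1)^n T_{M_\Arrangement}(1-t, 0)$ for arithmetic matroids, proved in \cite{moc12a} by inclusion-exclusion over the layer poset; alternatively, a direct term-by-term comparison works since at $x = 1$ only subsets $S$ with $\rk S = n$ survive in $T_{M_\Arrangement}(x, 0)$, and one recognises the resulting signed sum of multiplicities as Möbius inversion applied to the count of points in $\calL_n(\Arrangement)$.

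The main obstacle is precisely the vanishing of $e(W)$ on positive-dimensional layers, which is where the CM structure enters: \Cref{lm:descr_conn_comp} ensures that, although the layers of a CM elliptic arrangement can be products of several distinct elliptic curves isogenous to $\EC$ rather than copies of $\EC$ itself, they are still products of elliptic curves and therefore have trivial Euler characteristic. Once this geometric input is in hand, the argument runs in parallel with Bibby's treatment of the non-CM case~\cite{bib15}, and the remaining matroid-combinatorial identities rely only on the arithmetic matroid structure established in \Cref{thm:4EllipticArr-AritMat}.
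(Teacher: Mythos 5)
Your proposal is correct and follows essentially the same route as the paper: both use the Leray spectral sequence~\eqref{eq:Leray_SS}, the NBC count for the local hyperplane arrangements $A^\upharpoonright_W$, and the identity $\chi_{\calL(\Arrangement)}(t) = (-1)^n T_{M_\Arrangement}(1-t, 0)$ from \cite{moc12a}; your direct extraction of the Euler characteristic (via $e(W)=0$ for positive-dimensional layers) is precisely what the paper's Poincaré-polynomial specialization at $s=-t^2$, $t=-1$ accomplishes, since evaluating $(1+t)^{2(n-\rk W)}$ at $t=-1$ kills every term with $\rk W < n$. One small clarification: the vanishing $e(W)=0$ does not rely on \Cref{lm:descr_conn_comp} or on the CM structure --- any positive-dimensional layer of an elliptic arrangement (CM or not) is a compact complex torus, hence has zero Euler characteristic.
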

    
    \begin{proof} 
        The proof goes exactly as the one in \cite{bib15}, which uses a theorem of \cite{moc12a}. We provide a sketch of the proof in our case.
        Let us start by considering the bigraded Poincaré polynomial of $E_2^{\bullet,\bullet}$, described in Equation~\eqref{eq:Leray_SS}:
        \[
         P_{E_2}(t,s) = \sum_{p,q} \dim (E_2^{p,q}) t^p s^q =  \sum_{W \in \calL(\Arrangement)} (-1)^{\rk W} \mu_{\calL (\Arrangement)}(\hat{0}, W) (1+t)^{2(n-\rk W)} s^{\rk W} 
     \]
     where in the second equality we used that the cohomology of an hyperplane arrangement has rank equal to the number of no-broken circuits that is equal to $(-1)^{\rk W} \mu_{\calL (\Arrangement)}(\hat{0}, W)$.
     Recall from \cite[Theorem 5.6]{moc12a} that
     \[ \chi_{\calL(\Arrangement)} (t) = (-1)^n T_{M(\Arrangement)} (1-t,0).\]
     Therefore, 
     \[P_{E_2}(t,s)=(-s)^n\chi_{\calL(\Arrangement)} \left(\frac{(1+t)^2}{-s} \right) = s^n T_{M_\Arrangement}\left( 1+ \frac{(1+t)^2}{s},0 \right) \]
     Evaluating at $s=-t^2$ we obtain 
     \begin{align*}
         P_{\CM(\Arrangement)}(t,-t^2) &=P_{E_2}(t,-t^2) =t^{2n}\chi_{\calL(\Arrangement)} \left(\frac{(1+t)^2}{t^2} \right) \\
         &= (-t^2)^n T_{M_\Arrangement}\left( 1- \frac{(1+t)^2}{t^2},0 \right)
     \end{align*}
     Finally, setting $t=-1$ we have
     \[ e_{\CM(\Arrangement)} = (-1)^n T_{M_\Arrangement}\left( 1,0 \right) = \chi_{\calL(\Arrangement)} \left(0 \right) = (-1)^n \sum_{P \in \calL_n(\Arrangement)} \operatorname{nbc} (P). \]
     This completes the proof.
    \end{proof}
   
\section*{Acknowledgments}
The authors are members of the project PRIN 2022 “ALgebraic and TOPological combinatorics (ALTOP)” CUP J53D23003660006.
The second author is partially supported by INdAM - GNSAGA Project CUP E53C23001670001.

\printbibliography

\end{document}